\documentclass[a4paper, preprint, sort&compress, 3p]{elsarticle}

\usepackage[utf8]{inputenc}					

\usepackage{lmodern}						
\usepackage[english]{babel}					

\usepackage[colorlinks=true]{hyperref}
\usepackage{tikz}
\usetikzlibrary{calc, svg.path, patterns}

\usepackage{subcaption}
\usepackage{todonotes}

\DeclareMathAlphabet{\mathbbold}{U}{bbold}{m}{n}	
\newcommand{\idty}{\mathbbold{1}}

\usepackage{amsmath,amsfonts,amssymb,amsthm}



\theoremstyle{plain}
\newtheorem{theorem}{Theorem}
\newtheorem{prop}{Proposition}
\newtheorem{corollary}{Corollary}
\newtheorem{lemma}{Lemma}
\theoremstyle{definition}
\newtheorem{definition}{Definition}
\theoremstyle{remark}
\newtheorem{rmk}{Remark}


\DeclareMathOperator{\diag}{diag}	
\DeclareMathOperator{\sampl}{sampl}

\newcommand{\bR}{\mathbb{R}}					
\newcommand{\bC}{\mathbb{C}}					
\newcommand{\bG}{\mathbb{G}}
\newcommand{\bK}{\mathbb{K}}
\newcommand{\bH}{\mathbb{H}}
\newcommand{\bS}{\mathbb{S}}

\newcommand{\bN}{\mathbb{N}}					
\newcommand{\bZ}{\mathbb{Z}}
\newcommand{\cC}{\mathcal{C}}
\newcommand{\cF}{\mathcal{F}}

\newcommand{\cO}{\mathcal{O}}
\newcommand{\cP}{\mathcal{P}}
\newcommand{\cS}{\mathcal{S}}

\newcommand{\cJ}{\mathcal{J}}	
\newcommand{\cU}{\mathcal{U}}	


\newcommand{\shift}{S}

\DeclareMathOperator{\AP}{AP}

\DeclareMathOperator{\ev}{ev}

\title{Generalized Fourier-Bessel operator and almost-periodic interpolation and approximation}

\author[jp]{J.-P.\ Gauthier}
\ead{gauthier@univ-tln.fr}
\address[jp]{LSIS, Université de Toulon, La Garde CEDEX, France. \emph{Email:}
  \texttt{gauthier@univ-tln.fr}}

\author[dp]{D.\ Prandi}
\ead{dario.prandi@l2s.centralesupelec.fr}
\address[dp]{L2S, CentraleSupelec, Gif-sur-Yvette, France. \emph{Email:} \texttt{dario.prandi@l2s.centralesupelec.fr}}


\begin{document}

\begin{abstract}
We consider functions $f$ of two real variables, given as trigonometric functions over a finite set $F$ of frequencies.
This set is assumed to be closed under rotations in the frequency plane of angle $\frac{2k\pi}{M}$ for some integer $M$. 
Firstly, we address the problem of evaluating these functions over a similar finite set $E$ in the space plane and, secondly, we address the problems of interpolating or approximating a function $g$ of two variables by such an $f$ over the grid $E.$
In particular, for this aim, we establish an abstract factorization theorem for the evaluation function, which is a key point for an efficient numerical solution to these problems. This result is based on the very special structure of the group $SE(2,N)$, subgroup of the group $SE(2)$ of motions of the plane corresponding to discrete rotations, which is a maximally almost periodic group.

Although the motivation of this paper comes from our previous works on biomimetic image reconstruction and pattern recognition, where these questions appear naturally, this topic is related with several classical problems: the FFT in polar coordinates, the Non Uniform FFT, the evaluation of general trigonometric polynomials, and so on.

\emph{Keywords:} Discrete Fourier transform; Hankel transform; Almost-periodic functions; Rotationally invariant grids.
\end{abstract}

\maketitle

\section{Introduction}

In several areas of physics and biomedical engineering (e.g., for NMRs), data in the frequency domain is obtained on polar grids, i.e., grids rotationally invariant under a certain number of discrete rotations.
The extraction of spatial information from this data thus requires the implementation of an efficient inverse Fourier Transform on such grids.
This problem is the focus of a large literature spanning a lot of different techniques as, for example, the implementation of a Polar FFT, as in \cite{bbbb, dddd, iiii}, or the application of the general theory of non-equispaced FFT, as in \cite{hhhh,cccc,gggg,kamm,aaa}.

A different, very natural but numerically challenging, possible solution is the exploitation of the decomposition of the 2D Fourier transform in polar coordinates. Indeed, it is well-known that the Fourier Transform $\hat f$ of a square integrable function $f$ can be obtained by, firstly, developing $f$ in a multipole series $f(\rho e^{i\theta}) = \sum_{n\in\bZ} f_n(\rho) e^{in\theta}$,  and then applying the Hankel transform on the $f_n$'s. More precisely, see Section~\ref{sec:se2} for details, polar coordinates allow to identify $L^2(\bR^2)\simeq L^2(\bS^1)\otimes L^2(\bR_+,\rho\,d\rho)$. Then, letting $\cF:L^2(\bS^1)\to L^2(\bZ)$ be the Fourier transform on $\bS^1$ and $\cF_{\bR^2}:L^2(\bR^2)\to L^2(\bR^2)$ be the one on $\bR^2$, we have
\begin{equation}\label{eq:hankel}
	\cF_{\bR^2} = (\cF^*\otimes \idty)\circ \cJ \circ (\cF\otimes \idty).
\end{equation}
Here, we implicitly identified $L^2(\bZ)\otimes L^2(\bR_+,\rho\,d\rho)\simeq \bigoplus_{k\in \bZ} L^2(\bR_+,\rho\,d\rho)$, and let $\cJ=  \bigoplus_{n\in \bZ} \cJ_{n}$ be the so-called \emph{Fourier-Bessel operator}. Namely, $\cJ_n$ is a renormalized version of the $n$-th Hankel transform operator:
\begin{equation}
	\cJ_n \varphi(\lambda) = (-i)^n\int_{\bR_+} \varphi(\rho)J_n(\lambda\rho)\,\rho d\rho, \qquad \varphi:\bR_+\to\bR,
\end{equation}
where $J_n$ is the $n$-th Bessel function of the first kind.

Clearly, due to the complicated (non-periodic) nature of Bessel functions, the main difficulty encountered with this approach is the computation of the Hankel transform, for which many algorithms have been proposed, see e.g., \cite{cerjan, knock} or \cite{key} for a review. 
In this paper, we propose a completely different approach from what is usually taken: Instead of approximating $f$ in the continuous space variable by a discrete sampling, we develop a method to exactly evaluate a counterpart of \eqref{eq:hankel} for discretized frequencies. 

We propose to consider the data to represent not square integrable functions of $L^2(\bR^2)$, but a finite-dimensional subspace of \emph{Bohr almost-periodic functions}. 
That is, we assume to be given a set of frequencies $\tilde F\subset\bR^2$ invariant under rotations of multiples of $\tfrac{2\pi}N$, $N\in\bN$,  representing the frequencies obtained via the sensors (see Figure~\ref{fig:bispectral}), and we focus on functions $f:\bR^2\to \bC$ with frequencies in $\tilde F$. That is,   
\begin{equation}\label{eq:almost-per-func}
	f(x) = \sum_{\lambda\in \tilde F} \hat f(\lambda) e^{i\langle \lambda, x\rangle}, \qquad \forall x\in\mathbb R^2, 
	\quad\text{where}\quad \hat f:\tilde F\to \bC.
\end{equation}
Observe that, if the set $\tilde F$ is contained in a periodic square lattice of $\bR^2$, the above essentially coincides with the discrete Fourier transform, see \cite{cool,at, ATW}.

Let $\cS = \{ \rho e^{i\theta} \in\bR^2\mid \theta\in[0,2\pi/N) \}\subset \bR^2$ be the slice of angle $2\pi/N$.
Then, by its rotational invariance, we have that $\tilde F$ is completely determined by some $F\in\cS$. Namely, letting $R_\theta$ be the rotation of $\theta$ around the origin, we have $\tilde F = \bigcup_{k=0}^{N-1}R_{\frac{2\pi}N k}(F)$.
The space of functions of the form \eqref{eq:almost-per-func}, denoted by $\AP_F(\bR^2)$, is then invariant under the action of the semidiscrete group of rototranslations $SE(2,N)$, that is, the subgroup of the group $SE(2)$ of Euclidean motions where only the rotations $\{R_{\frac{2\pi}N k}\}_{k=0}^{N-1}$ are allowed.

As a first step in our analysis, exploiting the deep connection between decomposition \eqref{eq:hankel} and the action of the group
of rototranslations $SE(2) = \bS^1\ltimes\bR^2$ on $L^2(\bR^2)$, we generalize \eqref{eq:hankel} to almost-periodic function on a group $\bG$, where $\bG = \bK\ltimes \bH$ is the semidirect product of two abelian groups, with $\bK$ finite. 
The finiteness of $\bK$ then allows us to express decomposition \eqref{eq:hankel} in terms of a generalized Fourier-Bessel operator, which turns out to be well defined on a large set of numerically relevant functions: finite-dimensional $\bK$-invariants sets of Bohr almost periodic functions on $\bG$.
These sets, denoted by $\AP_F(\bG)$, are composed of linear combinations of the matrix coefficients of a finite set $F$ of irreducible unitary $\#(\bK)$-dimensional representations of $\bG$. 
This is the content of Theorem~\ref{thm:general} in Section~\ref{sec:harmonic}.
Then, in the remainder of the section, we consider the problem of determining an almost-periodic function in $\AP_F(\bG)$ which interpolates a given function $\psi:\bG\to \bC$ on some finite set $\tilde E\subset \bG$, invariant under the action of $\bK$. Indeed, in Theorem~\ref{thm:discrete}, we present a discretized version of decomposition \eqref{eq:hankel}, in terms of a discretization of the generalized Fourier-Bessel operator. 

In the second part of the paper, setting $\bG$ to be the semidiscrete group of rototranslations $SE(2,N)$, we particularize the theoretical results of the first part and present numerical algorithms for the (exact) evaluation, interpolation, and approximation of functions of the form \eqref{eq:almost-per-func} on finite sets of spatial samples $\tilde E\subset \bR^2$, invariant under discrete rotations of $2\pi k/N$, $k\in\bZ_N$. This is an instance of a very general problem, and can be seen as a generalization of the discrete Fourier Transform and its inverse, that act on regular square grids, i.e., invariant under the the action of $SE(2,4)$.

\begin{figure}
	\centering
	\includegraphics[width=.4\textwidth]{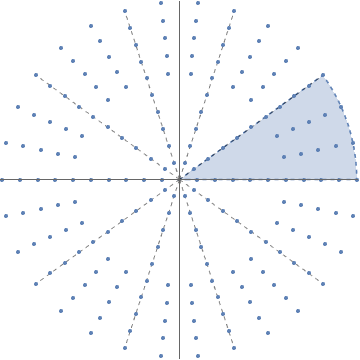}
	\caption{An example of the considered sets of frequencies/spatial samples, for $N=10$. The blue region is $\mathcal S_N$, see Section~\ref{sec:ap-interp}.}
	\label{fig:bispectral}
\end{figure}

To better precise the above result, let $Q,P\in\bN$ be such that $\#(F) =N\times Q$ and $\#(E) = N\times P$, and denote by $\bC^{\tilde F}\simeq \bC^N\otimes\bC^Q$ and $\bC^{\tilde E}\simeq \bC^N\otimes\bC^P$ the sets of complex-valued functions over $\tilde F$ and $\tilde E$, respectively. 
Additionally, we denote by $\sampl(\varphi) \in \bC^{\tilde E}$ the sampling of a function $\varphi:\bR^2\to \bC$ on $\tilde E$. 
Then, the \emph{evaluation operator}, denoted by $\ev$, associates to the coefficients $\hat f$ the corresponding samples $\sampl(f)$, according to \eqref{eq:almost-per-func}. That is,
\begin{equation}
	\begin{array}{cccc}
		\ev & \bC^{\tilde F} & \to & \bC^{\tilde E} \\
		 & \hat f & \mapsto & \sampl(f).
	\end{array}	
\end{equation}
It is clear that $\ev$ depends only on the choice of $\tilde E$ and $\tilde F$. 

From a numerical perspective, a naive implementation of $\ev$ requires the evaluation of a $PN\times QN$ complex dense matrix $A$ and its multiplication with a vector of length $QN$. Even assuming $A$ to be precomputed, in the best case scenario where $Q = P$, via the Coppersmith-Winograd algorithm this yields a computational complexity of $\mathcal O(Q^{2.376}N^{2.376})$. Up to a prefactorization of $A$, e.g.\ via a QR decomposition, the computation of $\ev^{-1}$ has then a computational complexity of $\mathcal O(Q^2N^2)$.
Our main (practical) result, contained in Corollary~\ref{cor:ap-r} (see Section~\ref{sec:image}), is that the row-wise discrete Fourier transform interwines $\ev$ with a block diagonal operator whose blocks have size $P\times Q$: the discrete Fourier-Bessel operator. This allows to lower the computation complexity of computing $\ev$ to $\mathcal O(QN\log N+NQ^{2.376})$ and $\ev^{-1}$ to $\mathcal O(N(Q^2+Q\log N))$.

As a final note, we point out that recent contributions of the authors on the mathematical structure of the primary visual cortex, have shown how the exploitation the action of the semi-discrete group of rototranslations can yield an efficient and natural (biomimetic) framework both for image reconstruction and for pattern recognition \cite{bosc1, bosc, gau, bohi}.

\section{Connection between Bessel functions of the first kind and the group of Euclidean motions}\label{sec:se2}

For more details on what follows, we refer the reader to \cite[Ch. 4]{vilenkin}. 
Recall that the group of Euclidean motions $SE(2)$ is the semidirect product $\bS^1\ltimes\bR^2$, under the action of rotations $R:\bS^1\to\cU(\bR^2)$. In the following we still denote by $R$ the contragredient action of rotations on $L^2(\bR^2)$. Namely, $R_\theta f(x) = f(R_{-\theta}x)$ for all $f\in L^2(\bR^2)$.

The dual object $SE(2){\hat{}}$ of $SE(2)$, i.e.\ the set of equivalence classes of unitary irreducible representations, can be completely determined by Mackey machinery. Indeed, it turns out that $SE(2){\hat{}}$ can be parametrized by $\bZ\sqcup \bR_+$ as follows: To $\lambda>0$ it corresponds the following unitary irreducible representation on $L^2(\bS^1)$:
\begin{equation}
	\chi^\lambda(\theta,x) = \diag_\alpha\left(e^{i \langle R_\alpha\Lambda ,  x\rangle}\right) S(\theta), 
	\qquad\text{with } \Lambda=(\lambda, 0).
\end{equation}
Here, we denoted by $\diag_\alpha \varphi_\alpha$ the multiplication operator by $\varphi\in L^2(\bS^1)$ and by $S:\bS^1\to \cU(L^2(\bS^1))$ the shift operator $S(\theta)\varphi(\alpha) = \varphi(\alpha-\theta)$.
On the other hand, morally for $\lambda = 0$, to each $n\in\bZ$ corresponds the representation $\chi^{0,n}(\theta,x) = e^{i n\theta}$ on $\bC$.

\begin{prop}\label{prop:coeff-se2}
	The matrix elements of $\chi^\lambda$, $\lambda\in\bR_+$, with respect to the basis $\{e^{in\theta}\}_{n\in\bZ}$ of $L^2(\bS^1)$ are
	\begin{equation}
		c^\lambda_{m,n}(\theta,\rho e^{i\varphi}):= \frac1{2\pi}\int_0^{2\pi} \chi^\lambda(\theta,x) e^{in\alpha} \, e^{-im\alpha}\,d\alpha = e^{-in\theta}e^{i(n-m)\varphi}i^{n-m}J_{n-m}(\lambda \rho).
	\end{equation}
\end{prop}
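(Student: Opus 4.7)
The proof is essentially a direct calculation that unwinds the definition of $\chi^\lambda$ and reduces the remaining integral to a standard Bessel identity, so I would present it in four short steps.

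First, I would unpack the action of $\chi^\lambda(\theta,x)$ on the basis vector $e^{in\alpha}$. By the very definition of the representation, $\diag_\alpha(e^{i\langle R_\alpha\Lambda,x\rangle})$ is multiplication by the indicated function and $S(\theta)$ is the shift by $\theta$, so
\begin{equation*}
	[\chi^\lambda(\theta,x)\,e^{in\cdot}](\alpha) = e^{i\langle R_\alpha\Lambda,\,x\rangle}\,e^{in(\alpha-\theta)}.
\end{equation*}
For $x=\rho e^{i\varphi}$ and $\Lambda=(\lambda,0)$, a direct computation gives $\langle R_\alpha\Lambda,x\rangle = \lambda\rho\cos(\alpha-\varphi)$.

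Next I substitute into the integral defining $c^\lambda_{m,n}$, pull out the factor $e^{-in\theta}$ which does not depend on $\alpha$, and change variables $\beta=\alpha-\varphi$. Using the $2\pi$-periodicity of the integrand this yields
\begin{equation*}
	c^\lambda_{m,n}(\theta,\rho e^{i\varphi}) = e^{-in\theta}\,e^{i(n-m)\varphi}\,\frac{1}{2\pi}\int_0^{2\pi} e^{i\lambda\rho\cos\beta}\,e^{i(n-m)\beta}\,d\beta.
\end{equation*}

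The final step is to identify the remaining integral. Recalling the Jacobi--Anger expansion $e^{iz\cos\beta} = \sum_{k\in\bZ} i^k J_k(z) e^{ik\beta}$ and using orthogonality of the characters on $\bS^1$, the coefficient picked out with $\ell:=n-m$ is $i^{-\ell}J_{-\ell}(\lambda\rho)$. Applying the parity relation $J_{-\ell}(z)=(-1)^\ell J_\ell(z)=i^{2\ell}J_\ell(z)$ rewrites this as $i^\ell J_\ell(\lambda\rho) = i^{n-m}J_{n-m}(\lambda\rho)$, giving exactly the claimed formula.

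I do not anticipate any real obstacle: the only non-routine ingredient is the Jacobi--Anger identity (equivalently, the integral representation of $J_k$), and some care with the sign conventions $J_{-\ell}=(-1)^\ell J_\ell$ to match the $i^{n-m}$ rather than $i^{m-n}$ in the statement. If one preferred to avoid invoking Jacobi--Anger, one could instead take the integral on the right-hand side \emph{as} the definition of $i^\ell J_\ell$ (Bessel's integral), which makes the last step tautological.
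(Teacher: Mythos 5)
Your proposal is correct and follows essentially the same route as the paper: unwind the definition of $\chi^\lambda$, factor out $e^{-in\theta}e^{i(n-m)\varphi}$ by a change of variables, and identify the remaining integral via a standard Bessel identity. The only cosmetic difference is that you invoke Jacobi--Anger together with the parity relation $J_{-\ell}=(-1)^\ell J_\ell$, whereas the paper substitutes $\eta=\pi/2-\gamma$ to land directly on Bessel's integral representation; the two are equivalent.
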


\begin{proof}
	By definition of $\chi^\lambda$ we have
	\begin{equation}
		c^\lambda_{m,n}(\theta,\rho e^{i\varphi}) 
		= \frac1{2\pi}e^{-in\theta}\int_0^{2\pi} e^{i\lambda\rho\cos(\varphi-\alpha)} e^{i(n-m)\alpha} \,d\alpha
		= \frac1{2\pi}e^{-i n\theta}e^{i(n-m)\varphi}\int_0^{2\pi} e^{i(\lambda\rho\cos\gamma-(n-m)\gamma)} \,d\gamma.
	\end{equation}
	Let us observe that, by the integral characterization of $J_{n-m}$, the change of variables $\eta = \pi/2-\gamma$ yields
	\begin{equation}\label{eq:besselint}
		\int_0^{2\pi} e^{i(\lambda\rho\cos\gamma-(n-m)\gamma)} \, d\gamma
		= (-i)^{n-m}\int_0^{2\pi} e^{-i(\lambda\rho\sin\eta +(n-m)\eta)} \, d\alpha
		= 2\pi i^{n-m} J_{n-m}(\lambda\rho),
	\end{equation}
	thus completing the proof.
\end{proof}
	
\begin{definition}
	The \emph{Fourier-Bessel operator} $\cJ$ is defined on $\bigoplus_{n\in\bZ}L^2(\bR_+,\rho\,d\rho)\cap L^1(\bR_+,\sqrt{\rho}\,d\rho)$ by
	\begin{equation}
		\cJ = \bigoplus_{n\in\bZ} \cJ_{n},
	\end{equation}
	where $\cJ_{n}$, the \emph{Fourier-Bessel operator of order $n\in\bZ$}, is given by
	\begin{equation}
		\cJ_n\varphi(\lambda) = (-i)^n\int_{0}^{+\infty} \varphi(\rho)J_{n}(\rho)\,\rho d\rho
		\qquad
		\forall \varphi\in L^2(\bR_+,\rho\,d\rho)\cap L^1(\bR_+,\sqrt{\rho}\,d\rho).
	\end{equation}
\end{definition}
The fact that the Fourier-Bessel operator is well-defined under the above restrictions is a direct consequence of the asymptotic formulae for $J_n$. 

Recall that polar coordinates on $\bR^2$ naturally induce the isometry $\Xi:L^2(\bR^2)\to L^2(\bS^1)\otimes L^2(\bR_+,\rho\,d\rho)$. Letting $\cF_{\bS^1}:L^2(\bS^1)\to L^2(\bZ)$ be the Fourier transform on $\bS^1$, the operator $\cF_{\bS^1}\otimes\idty$ is an isometry between $L^2(\bS^1)\otimes L^2(\bR_+,\rho\,d\rho)$ and $L^2(\bZ)\otimes L^2(\bR_+,\rho\,d\rho)$. 
Moreover, we have an isometry $\cP$ between the latter space and $\bigoplus_{k\in \bZ}L^2(\bR_+,\rho\,d\rho)$. Considering the canonical basis $\{\delta_n\}_{n\in\bZ}$ on $L^2(\bZ)$, where $\delta_n$ is the Kroenecker delta, this isometry is defined on simple tensors by
\begin{equation}
	\cP\left(\sum_{n\in\bZ}a_n \delta_n\otimes \varphi \right) := (a_n\varphi)_{n\in\bN} \in \bigoplus_{n\in\bZ}L^2(\bR_+,\rho\,d\rho).
\end{equation}

The next result is exactly \eqref{eq:hankel}, although in the latter we implicitly assumed the identifications given by $\Xi$ and $\cP$.
In particular, it allows to connect the Fourier-Bessel operator with the 2D Fourier transform and gives a Plancherel Theorem for the Fourier-Bessel operator. 

\begin{theorem}\label{thm:hankel}
	Let $\cF_{\bR^2}$ be the Fourier transform on $\bR^2$.
	The Fourier-Bessel operator uniquely extends to an isometry of $\bigoplus_{n\in\bZ}L^2(\bR_+,\rho\,d\rho)$ such that
	\begin{equation}\label{eq:main-se}
		\Xi\circ\cF_{\bR^2}\circ\Xi^{-1} = (\cF_{\bS^1}^*\otimes\idty)\circ\cP^{-1} \circ \cJ \circ \cP \circ(\cF_{\bS^1}\otimes\idty).
	\end{equation}
\end{theorem}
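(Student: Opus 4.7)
The plan is to verify the identity on a dense subspace where all objects are unambiguously defined, and then to invoke the isometric character of the left-hand side together with the BLT theorem to conclude both that $\cJ$ extends uniquely to an isometry of $\bigoplus_{n\in\bZ}L^2(\bR_+,\rho\,d\rho)$ and that \eqref{eq:main-se} persists on the whole space.

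First, I would reduce to functions $f$ of the ``pure multipole'' form $f(\rho e^{i\varphi})=g(\rho)e^{in\varphi}$ with $g\in C_c^\infty(\bR_+)$ and $n\in\bZ$. Finite linear combinations of such functions are dense in $L^2(\bR^2)$ (their $\Xi$-images span a dense subspace of $L^2(\bS^1)\otimes L^2(\bR_+,\rho\,d\rho)$ by Fubini and the density of trigonometric polynomials on $\bS^1$), and each such $g$ satisfies the integrability condition needed to apply $\cJ_n$ classically. For such an $f$, the composition $\cP\circ(\cF_{\bS^1}\otimes\idty)\circ\Xi$ produces the sequence supported only at index $n$ and equal to $g$ there; applying $\cJ$ and then $(\cF_{\bS^1}^*\otimes\idty)\circ\cP^{-1}$ yields the function whose $\Xi$-image is $(\lambda,\theta)\mapsto e^{in\theta}\,\cJ_n g(\lambda)$, up to the chosen normalization of $\cF_{\bS^1}$.

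The crux is then the direct polar-coordinate evaluation of $\cF_{\bR^2}f$. Writing $\widehat{f}(\lambda e^{i\theta})=\int_0^{\infty}\int_0^{2\pi} g(\rho)\,e^{in\varphi}\,e^{-i\lambda\rho\cos(\theta-\varphi)}\,\rho\,d\varphi\,d\rho$ and performing the change of variables $\gamma=\varphi-\theta$, the angular integral becomes $e^{in\theta}\int_0^{2\pi} e^{in\gamma - i\lambda\rho\cos\gamma}\,d\gamma$, which by the shift $\gamma\mapsto\gamma+\pi$ together with $J_{-n}(x)=(-1)^n J_n(x)$ reduces to the integral identity \eqref{eq:besselint} already used in the proof of Proposition~\ref{prop:coeff-se2}; the result is $2\pi\,(-i)^n J_n(\lambda\rho)$. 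Therefore $\widehat{f}(\lambda e^{i\theta})=2\pi\,e^{in\theta}\,\cJ_n g(\lambda)$, matching the right-hand side up to the factor that encodes the normalization of the Fourier series on $\bS^1$.

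Finally, since $\Xi\circ\cF_{\bR^2}\circ\Xi^{-1}$ is an isometry by Plancherel and the dense subspace above is stable under finite sums, the restriction of $\cJ$ to that subspace is isometric into $\bigoplus_{n\in\bZ}L^2(\bR_+,\rho\,d\rho)$ and admits a unique bounded extension to the whole direct sum, which is automatically an isometry; the identity then holds globally by continuity of all operators involved. I expect the main technical nuisance to be the bookkeeping of signs and powers of $i$, ensuring that the $(-i)^n$ arising from the integral representation of $J_n$ matches precisely the $(-i)^n$ included in the definition of $\cJ_n$ (rather than the $(-1)^n i^n$ that would appear were one to work with $\cF_{\bR^2}^{-1}$ in place of $\cF_{\bR^2}$).
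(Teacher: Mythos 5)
Your proposal is correct and follows essentially the same route as the paper: verify \eqref{eq:main-se} on single multipoles $e^{in\theta}\varphi(\rho)$ by reducing the angular integral to the Bessel integral identity \eqref{eq:besselint}, then extend by density and the Plancherel theorem for $\cF_{\bR^2}$ to get the unique isometric extension of $\cJ$. The only differences are cosmetic (you take $g\in C_c^\infty(\bR_+)$ rather than the paper's $L^2(\bR_+,\rho\,d\rho)\cap L^1(\bR_+,\sqrt{\rho}\,d\rho)$, and the stray $2\pi$ you flag disappears once the paper's $\tfrac{1}{2\pi}$ normalization of $\cF_{\bR^2}$ is used).
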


\begin{proof}
	Let $n\in\bZ$, $\varphi\in L^2(\bR_+,\rho\,d\rho)\cap L^1(\bR_+,\sqrt{\rho}\,d\rho)$, and consider $f(\rho e^{i\theta}) = e^{in\theta}\varphi(\rho)$. We claim that \eqref{eq:main-se} holds for such $f$'s, that is,
	\begin{equation}\label{eq:2dF}
		\cF_{\bR^2}f(\lambda e^{i\omega}) = e^{in\omega} \cJ_{n}\varphi(\lambda).
	\end{equation}
%
	Indeed, a simple computation yields
	\begin{equation}
		\cF_{\bR^2} f(\lambda e^{i\omega}) 
		= \frac1{2\pi}\int_{0}^{+\infty} \varphi(\rho) \int_0^{2\pi}  e^{-i(\rho\lambda\cos(\omega-\theta)-n\theta)}\,d\theta\,\rho d\rho
		= \frac1{2\pi} e^{in\omega} \int_{0}^{+\infty} \varphi(\rho) \int_0^{2\pi}  e^{-i(\rho\lambda\cos(\gamma)+n\gamma)}\,d\theta\,\rho d\rho.
	\end{equation}
	Then, for such a $\varphi$, \eqref{eq:2dF} follows by the computations in \eqref{eq:besselint} and the fact that $J_n$ is real-valued. Indeed, these yield,
	\begin{equation}
		\cF_{\bR^2} f(\lambda e^{i\omega}) 
		= e^{in\omega} (-i)^n\int_{0}^{+\infty} \varphi(\rho)  J_n(\lambda\rho)\,\rho d\rho
		= e^{in\omega} \cJ_{n}\varphi(\lambda). 
	\end{equation}
	
	Since, functions $f$'s as above form a basis for $\bigoplus_{n\in\bZ}L^2(\bR_+,\rho\,d\rho)\cap L^1(\bR_+,\sqrt\rho\,d\rho)$, we have obtained,
	\begin{equation}
		\cJ = \cP\circ(\idty\otimes\cF_{\bS^1})\circ \Xi\circ \cF_{\bR^2}\circ\Xi^{-1}\circ (\idty\otimes\cF_{\bS^1}^*)\circ\cP^{-1} 
		\quad \text{on}\quad
		\bigoplus_{n\in\bZ}L^2(\bR_+,\rho\,d\rho)\cap L^1(\bR_+,\sqrt\rho\,d\rho).
	\end{equation}
	The result then follows from the Plancherel Theorem for $\cF_{\bR^2}$, which implies that the r.h.s.\ is an isometry, and the fact that $L^2(\bR_+,\rho\,d\rho)\cap L^1(\bR_+,\sqrt\rho\,d\rho)$ is dense in $L^2(\bR_+,\rho\,d\rho)$.
\end{proof}

%

\section{The generalized Fourier-Bessel operator and its application to almost-periodic interpolation}\label{sec:harmonic}

In this section we consider the general setting of a semidirect product group $\bG = \bK \ltimes \bH$, where $\bH$ is an abelian locally compact group and $\bK$ is an abelian finite group of cardinality $N$ acting on $\bH$ via $\phi:\bK\to \operatorname{Aut}(\bH)$. 
The Haar measure on $\bK$ is the discrete one, so that $L^2(\bK) = \bC^\bK\simeq\bC^N$.
We denote by $\shift:\bK\to\mathcal U(\bC^\bK)$ the left-regular representation of $\bK$. That is, 
\begin{equation}
	\shift(k).v(h) = v(k^{-1}h),\qquad \forall k,h\in\bK,\, v\in \bC^\bK.
\end{equation}
The concrete example to bear in mind of such $\bG$ is the semidiscrete group of rototranslations $SE(2,N)$, where $\bH=\bR^2$, $\bK=\bZ_N$, and $\phi(k) = R_{\frac{2\pi k}N}$. 

The action $\phi$ defines a contragredient action on the Pontryagin's dual $\phi:\bK\to\operatorname{Aut}({\widehat\bH})$, whose action on $\lambda\in\widehat \bH$ is
\begin{equation}
	\phi(k)\lambda(x) := \lambda(\phi(k^{-1}) x), \qquad \forall k\in\bK,\, x\in\bH.
\end{equation}
The set of orbits of $\widehat\bH$ under the action of $\bK$ with trivial stabilizer subgroup is denoted by $\widehat\cS\subset \widehat\bH$. 
As shown in Lemma~\ref{lem:irrep} in \ref{sec:aux}, to each $\lambda\in\widehat\cS$ we can associate the irreducible representation $T^\lambda$, acting on $\bC^\bK$, given by
\begin{equation}\label{eq:repr-semidi}
	T^\lambda(k,x) =  \diag_h (\phi(h)\lambda(x))\, S(k).
\end{equation}
Here, $\diag_h (v_h)$ is the diagonal operator corresponding to a given vector $v\in \bC^\bK$. We remark that, under some mild assumptions on $\bG$, by Mackey machinery \cite{Barut77theoryof} these are the only $N$-dimensional irreducible representations of $\bG$. When, as it happens for $SE(2,N)$, all $\lambda\in\widehat \bH\setminus\{0\}$ have trivial stabilizer subgroup, to  get all irreducible representations of $\bG$ it suffices to add to the $T^\lambda$'s the one-dimensional representations of $\bK$.

\subsection{Almost-periodic functions}
The Bohr compactification (see \cite{dixmier,weyl}) of $\bG$ is the universal object $(\bG^\flat,\Pi)$ in the category of diagrams
	$\pi:\bG \to \mathbb B$, 
where $\pi$ is a continuous homomorphism from $\bG$ to a compact group $\mathbb B$.  Then, all continuous maps of $\bG$ in a compact group factor out w.r.t.\ the inclusion $\Pi: \bG \hookrightarrow \bG^{\flat}$. That is, if $\mathbb B$ is a compact group and $\Psi:\bG\rightarrow \mathbb B$ is continuous, then
\[
\Psi=\psi\circ\Pi,\text{ for a continous map }\psi%
:\bG^{\flat}\rightarrow \mathbb B\text{ .}%
\]
By \cite[16.5.3]{dixmier}, the group $\bG$ is always maximally almost periodic, that is, the inclusion $\Pi$ is a continuous \emph{injective} homomorphism.
Regarding duality over this kind of groups, the reader is advised to consult
\cite{heyer}, see also \cite{hew} for compact groups.
Then, the set $\AP(\bG)$ of almost periodic functions in the sense of Bohr over $\bG$ is the set of continuous functions $f:\bG\to\mathbb C$ that lift to continuous functions over $\bG^{\flat}$. That is, $f=\Psi\circ\Pi$ for a certain continuous $\Psi:\bG^{\flat}\to\mathbb{C}$.

It is classical that almost-periodic functions are the uniform limit of linear combinations of coefficients of finite-dimensional unitary representations \cite[Theorem 16.2.1]{dixmier}. 
A crucial point is the following: Any $\lambda\in\widehat\cS$ determines an $N$-dimensional space of almost-periodic functions which is invariant under the action of the representation $T^{\lambda}$ (which makes sense on $\AP(\bG)$, and is obviously unitarily equivalent to the $T^{\lambda}$ defined above). 
The rest of this section is indeed devoted to the study of the subspaces of almost-periodic functions determined by a finite set of representations of the form \eqref{eq:repr-semidi}. 
That is, for a given finite set $F\subset \widehat\cS$, we focus on functions $f\in\AP(\bG)$ such that
\begin{equation}\label{eq:abstract-ap}
	f(g) = \sum_{\lambda\in F} \sum_{n,m\in\bK}\langle T^\lambda(g) \delta_n, \delta_m\rangle \,a_{n,m}(\lambda), \qquad a_{n,m}:F\to\bC.
\end{equation}
Here, we let $\{\delta_n\}_{n\in\bK}$ be the canonical basis of $L^2(\bK)\simeq \bC^\bK$. We denote by $\AP_F(\bG)$ the set of almost-periodic functions of the form \eqref{eq:abstract-ap}.
Direct computations yield the following.

\begin{prop}\label{prop:ap-simpl}
	For any $f\in\AP_F(\bG)$ and any $(k,x)\in\bG$, it holds,
	\begin{equation}
		f(k,x) = \sum_{\lambda\in F} \sum_{n\in\bK} \phi(nk)\lambda(x) \,\hat f(k, n, \lambda),\qquad\text{where } \hat f(k,n,\lambda) = a_{n,nk}(\lambda).
	\end{equation}
In particular, there exists a (linear) bijection $\cF_{\AP}:\AP_F(\bG)\to \bC^\bK\otimes\bC^\bK\otimes \bC^F$, mapping $f$ to $\hat f$.
\end{prop}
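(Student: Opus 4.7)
\smallskip
\noindent\textbf{Proof plan.} The claim is essentially a bookkeeping computation combined with a uniqueness statement for the parametrization, so the plan is:

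First, I will compute the matrix coefficients appearing in \eqref{eq:abstract-ap} using the explicit form \eqref{eq:repr-semidi} of $T^{\lambda}$. Since $S(k)\delta_n(h)=\delta_n(k^{-1}h)$ equals $1$ exactly when $h=kn$, one has $S(k)\delta_n=\delta_{kn}$, and then $\diag_h(\phi(h)\lambda(x))\delta_{kn}=\phi(kn)\lambda(x)\,\delta_{kn}$. Taking the inner product with $\delta_m$ gives
\begin{equation*}
\langle T^{\lambda}(k,x)\delta_n,\delta_m\rangle
=\phi(kn)\lambda(x)\,\delta_{m,kn},
\end{equation*}
and using that $\bK$ is abelian (so $kn=nk$), the sum over $m$ in \eqref{eq:abstract-ap} collapses to the single term $m=nk$. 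Substituting and renaming $a_{n,nk}(\lambda)=:\hat f(k,n,\lambda)$ yields the announced formula.

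Next, to obtain the bijection $\cF_{\AP}$, I split the statement into two pieces. On the level of index sets, the assignment $(a_{n,m}(\lambda))\mapsto(\hat f(k,n,\lambda))$ given by $\hat f(k,n,\lambda)=a_{n,nk}(\lambda)$ is clearly a linear bijection on $\bC^{\bK}\otimes\bC^{\bK}\otimes\bC^{F}$, since for each fixed $n\in\bK$ the map $k\mapsto nk$ is a bijection of $\bK$. The only nontrivial point is that the map from coefficient arrays $a$ to functions $f$, as in \eqref{eq:abstract-ap}, is itself injective; this is what allows us to view $\hat f$ as genuinely attached to $f$ rather than to a particular choice of coefficients.

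For this injectivity I would invoke the Peter--Weyl-type linear independence of matrix coefficients. Passing through the Bohr compactification $\bG^{\flat}$ (the representations $T^{\lambda}$ for $\lambda\in\widehat{\cS}$ extend to irreducible unitary representations of $\bG^{\flat}$ by the universal property recalled before the proposition, and remain pairwise inequivalent since Lemma~\ref{lem:irrep} identifies them with distinct elements of $\widehat{\cS}$), Schur orthogonality on the compact group $\bG^{\flat}$ forces the $N^2\cdot\#(F)$ functions $(k,x)\mapsto\langle T^{\lambda}(k,x)\delta_n,\delta_m\rangle$ to be linearly independent in $\AP(\bG)$. Hence if two coefficient arrays $a,a'$ yield the same $f\in\AP_F(\bG)$, they coincide, proving that $\cF_{\AP}$ is a well-defined bijection. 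The main obstacle here is precisely this linear-independence step; once it is invoked, the remainder of the proposition is a direct verification.
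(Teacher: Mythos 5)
Your computation of the matrix coefficients in the canonical basis ($S(k)\delta_n=\delta_{kn}$, hence $\langle T^{\lambda}(k,x)\delta_n,\delta_m\rangle=\phi(kn)\lambda(x)\,\delta_{m,kn}$, with the sum over $m$ collapsing by commutativity of $\bK$) is exactly the ``direct computation'' the paper alludes to, and your reindexing $\hat f(k,n,\lambda)=a_{n,nk}(\lambda)$ matches the statement. The one place you go beyond the paper is the injectivity of $a\mapsto f$: the paper leaves this implicit, while you justify it by extending the $T^{\lambda}$ to $\bG^{\flat}$ via the universal property and invoking Schur orthogonality there; this is a correct and welcome addition, with the minor caveat that pairwise inequivalence of the $T^{\lambda}$ for distinct $\lambda\in\widehat{\cS}$ is not literally contained in Lemma~\ref{lem:irrep} --- it follows, e.g., by restricting $T^{\lambda}$ to $\{e\}\times\bH$ and observing that the characters occurring there form the $\bK$-orbit of $\lambda$, and distinct elements of $\widehat{\cS}$ lie in distinct orbits.
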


Observe that, since $\bG^\flat = \bK\ltimes\bH^\flat$, and $\bK$ is finite, it holds that $f\in\AP(\bG)$ if and only if $f(k,\cdot)$ is an almost-periodic function over the abelian group $\bH$, for all $k\in\bK$.
For this reason, there exists a natural embedding of $\AP(\bH)$, the set almost-periodic functions over $\bH$, in $\AP(\bG)$ that acts by lifting $\psi\in\AP(\bH)$ to $\Psi\in\AP(\bG)$ given by $\Psi(k,x) = \delta_0(k)\,\psi(x)$. 
As an immediate consequence of this fact and of Proposition~\ref{prop:ap-simpl}, we get the following.

\begin{corollary}\label{cor:ap-simpl-H}
	Let $\AP_F(\bH)$ be the subspace of $\AP(\bH)$ of almost-periodic functions on $\bH$ that lift to $\AP_F(\bG)$. Then, for any $\psi\in\AP_F(\bH)$ we have $a_{n,m}=0$ if $n\neq m$. Moreover, for any $x\in\bH$ it holds,
	\begin{equation}
		\psi(x) = \sum_{\lambda\in F} \sum_{n\in\bK} \phi(n)\lambda(x) \,\hat \psi(n, \lambda),\qquad\text{where } \hat \psi(n,\lambda) = a_{n,n}(\lambda).
	\end{equation}
	In particular, there exists a (linear) bijection $\cF_{\AP}:\AP_F(\bG)\to \bC^\bK\otimes \bC^F$, mapping $\psi$ to $\hat\psi$.
\end{corollary}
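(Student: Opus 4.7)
The plan is to lift $\psi \in \AP_F(\bH)$ to the almost-periodic function $\Psi$ on $\bG$ defined by $\Psi(k,x) = \delta_0(k)\,\psi(x)$, apply Proposition~\ref{prop:ap-simpl} to $\Psi$, and then read off information by evaluating the resulting identity separately at $k=0$ (the identity of $\bK$) and at $k\neq 0$. By construction $\Psi\in\AP_F(\bG)$, so Proposition~\ref{prop:ap-simpl} produces coefficients $a_{n,m}(\lambda)$ together with
\[
\Psi(k,x) = \sum_{\lambda\in F}\sum_{n\in\bK}\phi(nk)\lambda(x)\,a_{n,nk}(\lambda)\qquad \forall (k,x)\in\bG.
\]

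\textbf{Formula for $\psi$ and vanishing of the off-diagonal entries.} Setting $k=0$ collapses $\phi(nk)\lambda$ to $\phi(n)\lambda$ and the left-hand side to $\psi(x)$, which yields the claimed decomposition with $\hat\psi(n,\lambda) := a_{n,n}(\lambda)$. For any fixed $k\neq 0$, the left-hand side vanishes, so the change of summation index $m := nk$ gives
\[
0 = \sum_{\lambda\in F}\sum_{m\in\bK}\phi(m)\lambda(x)\,a_{mk^{-1},m}(\lambda),\qquad \forall x\in\bH.
\]
To deduce that each coefficient $a_{mk^{-1},m}(\lambda)$ vanishes, I invoke the linear independence of the characters $\{\phi(m)\lambda : m\in\bK,\ \lambda\in F\}$ of $\bH$. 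Since $F\subset\widehat\cS$ parametrises orbits with trivial stabilizer, for each $\lambda$ the $N$ characters $\phi(m)\lambda$ are pairwise distinct, and distinct $\lambda,\lambda'\in F$ lie in distinct $\bK$-orbits, so no two characters in the family agree. By the classical linear independence of characters of an abelian group, every $a_{mk^{-1},m}(\lambda)$ is zero. As $k$ ranges over $\bK\setminus\{0\}$ with $m$ fixed, $n := mk^{-1}$ ranges exactly over $\bK\setminus\{m\}$, so this is precisely $a_{n,m}(\lambda) = 0$ for all $n\neq m$.

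\textbf{Bijection and main obstacle.} The bijection claim follows by restricting $\cF_{\AP}$ of Proposition~\ref{prop:ap-simpl} to the subspace coming from $\AP_F(\bH)$: the previous step shows that the coefficient tensor $\hat\Psi\in\bC^\bK\otimes\bC^\bK\otimes\bC^F$ is supported on the ``diagonal'' $\{(0,n,\lambda)\}$, which is canonically identified with $\bC^\bK\otimes\bC^F$ via $\hat\psi(n,\lambda) = a_{n,n}(\lambda)$; conversely, any choice of $\hat\psi$ defines an element of $\AP_F(\bH)$ through the claimed formula, and the two assignments are mutually inverse by Proposition~\ref{prop:ap-simpl}. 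The only non-routine point is the linear-independence argument, which relies entirely on the trivial-stabilizer hypothesis built into the definition of $\widehat\cS$; everything else is bookkeeping on top of Proposition~\ref{prop:ap-simpl}.
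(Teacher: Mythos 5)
Your proof is correct and follows essentially the same route as the paper, which presents this corollary as an immediate consequence of the lift $\Psi(k,x)=\delta_0(k)\psi(x)$ combined with Proposition~\ref{prop:ap-simpl}. The only detail you make explicit that the paper leaves implicit is the linear independence of the characters $\phi(m)\lambda$ (valid precisely because $F\subset\widehat\cS$ consists of trivial-stabilizer orbit representatives), which is indeed what forces $a_{n,m}=0$ for $n\neq m$.
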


Later on, in Corollary~\ref{cor:general-H}, we will exploit the above in order to derive a decomposition of $\cF_{\AP}^{-1}$ over finite-dimensional subspaces of $\AP(\bH)$ from the corresponding result on finite-dimensional subspaces of $\AP(\bG)$.

\begin{rmk}
	The above lift from $\AP_F(\bH)$ to continuous functions in $\bG$ works exactly because $\bK$ is discrete. In order to perform this operation with a non-discrete $\bK$, as it happens in the case of the Euclidean group of motions $SE(2)$, one should fix a (continuous) function $\eta:\bK\to \bC$ and lift $\psi$ to $\Psi(k,x)=\eta(k)\psi(x)$. In this case a modified version of the above result still holds. However, Corollary~\ref{cor:general-H} does not. (See Remark~\ref{rmk:4}.)
\end{rmk}

\begin{rmk}\label{rmk:2}
	The above discussion is for the most part independent of the finiteness of $\bK$, and one could be tempted to apply these techniques to the case $\bG=SE(2)$.
	However, since the only finite-dimensional representations of $SE(2)$ are the characters of $\bS^1$, almost-periodic functions on the former are uniform limits of linear combinations of $(\theta,x)\mapsto e^{in\theta}$ and, as such, independent of $x\in\bR^2$.
	This shows that almost-periodic functions on $\bR^2$ \emph{cannot} be lifted to almost-periodic functions on $SE(2)$.
	This is a reflection of the fact that $SE(2)$, contrarily to $SE(2,N)$ is not maximally almost-periodic\footnote{One has to pay attention here: Maximal almost-periodic groups can have infinite-dimensional representations, but $SE(2)$ has ``not enough'' finite-dimensional ones to be maximal almost-periodic.}, and is one of the main reasons why in the following we will consider only the action of $SE(2,N)$, and not of $SE(2)$, on images.
\end{rmk}


\subsection{Generalized Fourier-Bessel operator}

Recall that the matrix coefficients of $T^\lambda$, with respect to the basis $\widehat\bK$ of $L^2(\bK)\simeq\bC^\bK$, are the functions,
\begin{equation}
	t^\lambda_{\hat m, \hat n}(g) := \sum_{\ell\in\bK} T^\lambda(g)\hat n(\ell)\,\overline{\hat m(\ell)},\qquad \forall g\in \bG, \, \hat n, \hat m\in \widehat\bK.
\end{equation}
Since, in the case of $SE(2)$, Bessel functions appeared inside in these coefficients, we now compute them in order to obtain a coherent generalization of Bessel functions to this context. 

In order to do so, let us mimic the polar coordinates construction, by choosing a bijection of $\bH/\bK\times\bK$ to $\bH$. To this aim, fix any section $\sigma: \bH/\bK\to \bH$, that we do not assume to have any regularity. Indeed, the arguments that follow work even for non-measurable $\sigma$'s. Then, $\Xi:f\mapsto f\circ \sigma$ is a bijection between functions on $\bG$ and functions on $\bK\times\bK\times\bH/\bK$. More precisely, if $f:\bG\to \bC$, then $\Xi f(k,h,y) := f(k,\phi(h)\sigma(y))$. 

\begin{prop}
	The matrix elements of $T^\lambda$, $\lambda\in \widehat\cS$, with respect to the basis $\widehat \bK$ of $L^2(\bK)$ are
	\begin{equation}
		\Xi\, t^\lambda_{\hat m, \hat n} (k,h,y) = 
		\overline{\hat n}(k)\, [\hat n - \hat m](h) \sum_{\ell\in\bK}\lambda(\phi(\ell)\sigma(y))\overline{[\hat n-\hat m]}(\ell)
	\end{equation}
\end{prop}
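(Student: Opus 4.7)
The plan is a direct computation from the definition of $T^\lambda$ in \eqref{eq:repr-semidi}, exploiting only that elements of $\widehat\bK$ are characters and that $\phi$ is the contragredient action on $\widehat\bH$.

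First, I unpack how $T^\lambda(k,x)$ acts on a vector $v\in\bC^\bK$: combining $S(k)v(h)=v(k^{-1}h)$ with the diagonal multiplication, I obtain $T^\lambda(k,x)v(h)=\phi(h)\lambda(x)\,v(k^{-1}h)$. Evaluating at $v=\hat n\in\widehat\bK$ and using that $\hat n$ is a character, $\hat n(k^{-1}h)=\overline{\hat n}(k)\,\hat n(h)$, so that $T^\lambda(k,x)\hat n(h)=\overline{\hat n}(k)\,\phi(h)\lambda(x)\,\hat n(h)$. Plugging this into the defining formula for $t^\lambda_{\hat m,\hat n}$ and recognizing $\hat n(\ell)\overline{\hat m(\ell)}=[\hat n-\hat m](\ell)$ gives
\begin{equation}
t^\lambda_{\hat m,\hat n}(k,x)=\overline{\hat n}(k)\sum_{\ell\in\bK}\phi(\ell)\lambda(x)\,[\hat n-\hat m](\ell).
\end{equation}

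Next, I apply $\Xi$, i.e.\ substitute $x=\phi(h)\sigma(y)$. By the very definition of the contragredient action, $\phi(\ell)\lambda(\phi(h)\sigma(y))=\lambda(\phi(\ell^{-1}h)\sigma(y))$. Since $\bK$ is abelian, the change of summation variable $\ell'=\ell^{-1}h$ is a bijection of $\bK$ and satisfies $[\hat n-\hat m](\ell)=[\hat n-\hat m](h)\,\overline{[\hat n-\hat m]}(\ell')$. Pulling the $h$-dependent character out of the sum yields exactly the claimed identity.

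The only subtle point is keeping track of which group operations are being inverted, since both the contragredient action and the passage from $\hat n(\ell)\overline{\hat m(\ell)}$ to $[\hat n-\hat m](\ell)$ introduce inverses; commutativity of $\bK$ (and hence of $\widehat\bK$) makes these inversions harmless under the change of variables, so no real obstacle arises beyond careful bookkeeping.
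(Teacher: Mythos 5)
Your proposal is correct and follows essentially the same route as the paper: both expand the matrix coefficient from the definition of $T^\lambda$, use the character property $\hat n(k^{-1}\ell)=\overline{\hat n}(k)\hat n(\ell)$ together with the contragredient action to rewrite the summand, and conclude with the change of variables $\ell\mapsto\ell^{-1}h$. The bookkeeping of inverses you flag is handled identically in the paper's computation, so there is nothing to add.
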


\begin{proof}
	It suffices to adapt the computations of Proposition~\ref{prop:coeff-se2} to this context. Namely, we have,
	\begin{equation}
		\begin{split}
			t^\lambda_{\hat m, \hat n}(k,\phi(h)\sigma(y)) 
			&= \sum_{\ell\in\bK} T^\lambda(k,\phi(h)\sigma(y))\hat n(\ell)\,\overline{\hat m(\ell)}\\
			&= \sum_{\ell\in\bK} \lambda(\phi(\ell^{-1}h)\sigma(y))\hat n(k^{-1}\ell)\,\overline{\hat m(\ell)}\\
			&= \overline{\hat n}(k) \sum_{\ell\in\bK} \lambda(\phi(\ell^{-1}h)\sigma(y))[\hat n-\hat m](\ell)\\
			&= \overline{\hat n}(k)[\hat n-\hat m](h) \sum_{r\in\bK} \lambda(\phi(r)\sigma(y))\overline{[\hat n-\hat m]}(r). \qedhere
		\end{split}
	\end{equation}
\end{proof}

The above proposition justifies the following.

\begin{definition}\label{def:gen-bessel}
	The \emph{generalized Bessel function of parameters $\hat n\in\widehat\bK$} is the function defined by
	\begin{equation}
		J_{\hat n}:\widehat\cS \times H/K\to \bC, \qquad J_{\hat n}(\lambda, y) = \sum_{r\in\bK} \lambda(\phi(r)\sigma(y))\overline{\hat n(r)}.
	\end{equation}
	The \emph{generalized Fourier-Bessel operator} is the operator 
	\begin{equation}
		\cJ:\bigoplus_{k\in\bK,\hat n\in \widehat \bK}\bC^F\to \bigoplus_{k\in\bK,\hat n\in \widehat \bK} C(\bH/\bK), \qquad \cJ = \bigoplus_{k\in\bK,\hat n\in \widehat \bK} \overline{\hat n(k)} \cJ_{\hat n},
	\end{equation}
	where $C(\bH/\bK)$ is the set of continuous functions on $\bH/\bK$, and $\cJ_{\hat n}$ is the operator with kernel $J_{\hat n}$. That is,
	\begin{equation}
		\cJ_{\hat n}\varphi(y) := \sum_{\lambda\in F} J_{\hat n}(\lambda, y)\varphi(\lambda),\qquad \forall \varphi\in \bC^F.
	\end{equation}
\end{definition}

\begin{rmk}
	The generalized Bessel functions depend on the choice of the section $\sigma$.
	Namely, if a different section $\sigma':\bH/\bK\to \bH$ is fixed we have that $\sigma'(y)=\phi(r)\sigma(y)$, and hence
	\begin{equation}
		J^{\sigma'}_{\hat n}(\lambda, y) = \hat n(r)\, J^{\sigma}_{\hat n}(\lambda, y).
	\end{equation}
\end{rmk}

Let $\cF:\bC^\bK\to \bC^{\widehat\bK}$ be the Fourier transform over $\bK$, defined for $v\in \bC^\bK$ as
\begin{equation}
	\cF v(\hat k) = \frac1{\sqrt N} \sum_{h\in\bK} \overline{\hat k(h)} v(h), \qquad \forall\hat k\in\widehat\bK.
\end{equation}
Moreover, for any vector space $V$ let $\cP_V: \bC^\bK \otimes \bC^{\widehat \bK}\otimes V \to \bigoplus_{k\in\bK,\hat n\in \widehat \bK} V$ be the bijection defined by 
\begin{equation}\label{eq:P}
	\cP_V(a\otimes b\otimes v) 
	= (a_k b_{\hat n} v)_{k\in\bK,\hat n\in\widehat\bK}, \qquad \forall a\in \bC^\bK,\, \, b\in\bC^{\widehat \bK},\, v\in V.
\end{equation}
We then have the following generalization of Theorem~\ref{thm:hankel}.

\begin{theorem}\label{thm:general}
	The bijection $\cF_{\AP}^{-1}:\bC^\bK\otimes\bC^\bK\otimes\bC^F\to \AP_F(\bG)$ admits the following decomposition
	\begin{equation}
		\cF_{\AP}^{-1} = \Xi^{-1}\circ(\idty \otimes\cF^*\otimes\idty)\circ \cP^{-1}_{C(\bH/\bK)}\circ\cJ\circ\cP_{\bC^F}\circ(\idty\otimes\cF\otimes\idty).
	\end{equation}
	In particular, the Fourier-Bessel operator is a bijection onto its range.
\end{theorem}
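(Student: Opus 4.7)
The plan is to verify the claimed identity by applying both sides to a convenient basis of $\bC^\bK\otimes\bC^\bK\otimes\bC^F$ and comparing against the explicit formula for $\cF_{\AP}^{-1}$ given in Proposition~\ref{prop:ap-simpl}. Since every operator in the composition is linear, it suffices to test on a single simple tensor $\delta_{k_0}\otimes\delta_{n_0}\otimes\delta_{\lambda_0}$.

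I would then propagate this basis element through the composition from right to left. The factor $\idty\otimes\cF\otimes\idty$ turns $\delta_{n_0}$ into $\frac{1}{\sqrt N}\sum_{\hat n}\overline{\hat n(n_0)}\delta_{\hat n}$, and $\cP_{\bC^F}$ is pure reindexing. On the $(k,\hat n)$-summand of $\bigoplus_{k,\hat n}\bC^F$, the generalized Fourier-Bessel operator contributes the scalar $\overline{\hat n(k)}$ and sends $\delta_{\lambda_0}$ to $y\mapsto J_{\hat n}(\lambda_0,y)$. After applying $\cP^{-1}_{C(\bH/\bK)}$, the intermediate datum, viewed as a function of $(k',\hat n,y)\in\bK\times\widehat\bK\times\bH/\bK$, is
\begin{equation*}
  \frac{1}{\sqrt N}\,\delta_{k_0}(k')\,\overline{\hat n(n_0 k_0)}\,J_{\hat n}(\lambda_0,y).
\end{equation*}

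The decisive step is applying $\idty\otimes\cF^*\otimes\idty$: expanding $J_{\hat n}(\lambda_0,y)=\sum_{r\in\bK}\lambda_0(\phi(r)\sigma(y))\overline{\hat n(r)}$ and exchanging sums, the dependence on $\hat n$ collects into $\sum_{\hat n}\hat n(h(n_0 k_0 r)^{-1})$. Pontryagin orthogonality on the finite abelian group $\bK$ forces this sum to equal $N\delta_{h,\, n_0 k_0 r}$, thereby selecting $r=(n_0 k_0)^{-1}h$. After applying $\Xi^{-1}$ with $x=\phi(h)\sigma(y)$, and using the contragredient identity $\lambda_0(\phi(k^{-1})x)=\phi(k)\lambda_0(x)$, the output reduces to $\delta_{k_0}(k')\,\phi(n_0 k_0)\lambda_0(x)$, which is exactly the value of $\cF_{\AP}^{-1}(\delta_{k_0}\otimes\delta_{n_0}\otimes\delta_{\lambda_0})$ prescribed by Proposition~\ref{prop:ap-simpl}.

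The last assertion is then immediate: $\cF_{\AP}^{-1}$ is a bijection by Proposition~\ref{prop:ap-simpl}, the two finite Fourier transforms are finite-dimensional isometries, and $\cP_{\bC^F}$, $\cP^{-1}_{C(\bH/\bK)}$, $\Xi^{-1}$ are bijections onto their targets by construction, so $\cJ$ must also be a bijection onto its image. The main obstacle is combinatorial bookkeeping: tracking which tensor slot each operator acts on, not confusing $\cF$ with $\cF^*$, and lining up the index conventions of $\cP_V$ with those of $\cJ$; no analytic difficulty arises, because the mathematical substance is just character orthogonality on $\bK$ combined with the explicit form of the generalized Bessel functions.
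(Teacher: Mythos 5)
Your proof is correct and follows essentially the same strategy as the paper: test the identity on a basis of simple tensors, push it through the factorization term by term, and match the result against the explicit formula of Proposition~\ref{prop:ap-simpl}. The only (cosmetic) difference is that the paper takes the characters $\hat n\in\widehat\bK$ as the basis of the middle factor, so that $\cF\hat n=\delta_{\hat n}$ trivializes the Fourier step and the character orthogonality you invoke explicitly at the end is instead absorbed into a change of variables.
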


\begin{proof}
	It is clear that it suffices to prove the statement for a basis of $\bC^\bK\otimes\bC^\bK\otimes\bC^F$ as, for example, 
	\begin{equation}
		B = \left\{ \delta_k\otimes \hat n\otimes \varphi\mid  k\in\bK,\,  \hat n\in\widehat\bK,\,\varphi\in\bC^F\right\}.
	\end{equation}
	Observe that $\cF\hat n = \delta_{\hat n}$ and that $\cP_{\bC^F}(\delta_k\otimes\delta_{\hat n}\otimes\varphi) = \delta_k\delta_{\hat n}\varphi$.
	Thus, 
	\begin{equation}
		[\cJ\otimes\cP_{\bC^F}\circ(\idty\otimes\cF\otimes\idty).(\delta_k\otimes\hat n\otimes\varphi)]_{h,\hat m} = 
		\begin{cases}
			\overline{\hat n(k)}\, \cJ_{\hat n}\varphi & \text{ if } k=h,\, \hat n=\hat m,\\
			0& \text{ otherwise}.
		\end{cases}
	\end{equation}
	Then, considering the inverse actions $\cF^*$ and $\cP^{-1}_{C(\bH/\bK)}$, we have
	\begin{equation}\label{eq:part1}
		(\idty \otimes\cF^*\otimes\idty)\circ \cP^{-1}_{C(\bH/\bK)}\circ\cJ\otimes\cP_{\bC^F}\circ(\idty\otimes\cF\otimes\idty).(\delta_k\otimes\hat n\otimes\varphi) = \delta_k\otimes\hat n\otimes (\overline{\hat n(k)}\cJ_{\hat n}\varphi).
	\end{equation}

	Let us compute, by Proposition~\ref{prop:ap-simpl},
	\begin{equation}\label{eq:part2}
		\begin{split}
			\Xi\circ\cF_{\AP}^{-1}[\delta_k\otimes \hat n\otimes \varphi] (h,r,y)
			&=\cF_{\AP}^{-1}[\delta_k\otimes \hat n\otimes \varphi] (h,\phi(r)\sigma(y))\\
			&= \sum_{\lambda\in F} \sum_{\ell\in\bK} \phi(\ell h r^{-1})\lambda(\sigma(y))\delta_k(h) \hat n(\ell) \varphi(\lambda)\\
			&= \delta_k(h) \overline{\hat n}(h) \hat n(r) \sum_{\lambda\in F} \sum_{s\in\bK} \phi(s^{-1})\lambda(\sigma(y)) \overline{\hat n}(s) \varphi(\lambda)\\
			&=  [\delta_k\otimes \hat n \otimes (\overline{\hat n(k)}\cJ_{\hat n} \varphi)](h,r,y),
		\end{split}
	\end{equation}
	where we applied the change of variables $s = \ell h r^{-1}$. Together with \eqref{eq:part1}, this completes the proof.
\end{proof}

Via the lift procedure described in the previous section, the above yields a similar result on $\AP_F(\bH)$.

\begin{corollary}\label{cor:general-H}
	Let us consider the restriction of the Fourier-Bessel operator given by 
	\begin{equation}
		\cJ_\bH: \bigoplus_{\hat n\in\widehat\bK}\bC^F\to \bigoplus_{\hat n\in\widehat\bK}C(\bH/\bK),
		\qquad
		\cJ_\bH = \bigoplus_{\hat n\in\widehat \bK}\cJ_{\hat n}.
	\end{equation}
	Then, the bijection $\cF_{\AP}^{-1}:\bC^\bK\otimes\bC^F\to \AP_F(\bH)$ admits the following decomposition
	\begin{equation}
		\cF_{\AP}^{-1} = \Xi^{-1}\circ(\cF^*\otimes\idty)\circ \cP^{-1}_{C(\bH/\bK)}\circ\cJ_{\bH}\circ\cP_{\bC^F}\circ(\cF\otimes\idty),
	\end{equation}
	where $\cP_{\bC^F}:\bC^{\widehat\bK}\otimes \bC^F\to \bigoplus_{\hat n\in\widehat\bK}\bC^F$ and $\cP_{C(\bH/\bK)}:\bC^{\widehat\bK}\otimes C(\bH/\bK) \to \bigoplus_{\hat n\in\widehat \bK}C(\bH/\bK)$ are the appropriate restrictions of the corresponding operators given by \eqref{eq:P}.
\end{corollary}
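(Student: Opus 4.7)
The plan is to deduce Corollary~\ref{cor:general-H} from Theorem~\ref{thm:general} by restricting the latter to the subspace of $\AP_F(\bG)$ consisting of lifts of functions on $\bH$. By Corollary~\ref{cor:ap-simpl-H}, the embedding $\iota:\AP_F(\bH)\hookrightarrow\AP_F(\bG)$, $\psi\mapsto\Psi$ with $\Psi(k,x)=\delta_0(k)\psi(x)$, corresponds at the level of tensor coefficients to the inclusion $\bC^\bK\otimes\bC^F\hookrightarrow\bC^\bK\otimes\bC^\bK\otimes\bC^F$, $\hat\psi\mapsto\delta_0\otimes\hat\psi$, since lifted functions have $a_{n,m}=0$ for $n\neq m$ and hence $\hat\Psi(k,n,\lambda)=\delta_0(k)\hat\psi(n,\lambda)$.

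First I would observe that each of the operators $\idty\otimes\cF\otimes\idty$, $\cP_{\bC^F}$, $\cJ$, $\cP^{-1}_{C(\bH/\bK)}$, and $\idty\otimes\cF^*\otimes\idty$ appearing in Theorem~\ref{thm:general} acts as the identity on the leftmost $\bC^\bK$ factor, so together they preserve the subspace $\{\delta_0\}\otimes\bC^\bK\otimes\bC^F$. Moreover, on inputs supported there, the block-diagonal operator $\cJ=\bigoplus_{k,\hat n}\overline{\hat n(k)}\cJ_{\hat n}$ is only activated on the summands with $k=0$, and since $\overline{\hat n(0)}=1$ for every character, it reduces exactly to $\bigoplus_{\hat n}\cJ_{\hat n}=\cJ_\bH$, with the operators $\cP_{\bC^F}$ and $\cP^{-1}_{C(\bH/\bK)}$ correspondingly collapsing to their $\bH$-versions described in the statement.

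Second I would analyse $\Xi^{-1}$. A tensor of the form $\delta_0\otimes T$ with $T\in\bC^{\widehat\bK}\otimes C(\bH/\bK)$ corresponds, under the identification $\bG\leftrightarrow\bK\times\bK\times\bH/\bK$, to the function $(k,r,y)\mapsto\delta_0(k)\,T(r,y)$. Its image under $\Xi^{-1}$ is therefore the $\bG$-function which vanishes for $k$ outside the identity and whose restriction to $k=0$ is the $\bH$-function $\Xi_\bH^{-1}T$, where $\Xi_\bH$ is the obvious $\bH$-analogue of $\Xi$. In other words, $\Xi^{-1}(\delta_0\otimes T)=\iota(\Xi_\bH^{-1}T)$.

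Combining these two observations and specialising Theorem~\ref{thm:general} to inputs of the form $\delta_0\otimes\hat\psi$, the $\iota$ on the output side and the $\hat\psi\mapsto\delta_0\otimes\hat\psi$ on the input side can both be stripped off, and what remains is exactly the claimed decomposition of $\cF_\AP^{-1}:\bC^\bK\otimes\bC^F\to\AP_F(\bH)$. The entire argument is essentially bookkeeping: the one mildly delicate point is checking that the normalisation factors $\overline{\hat n(k)}$ trivialise at $k=0$ so that the reduction $\cJ\big|_{k=0}=\cJ_\bH$ is clean, but this is immediate from Definition~\ref{def:gen-bessel}. I do not expect any genuine obstacle, since the lift $\iota$ is designed precisely so that the $\bG$-decomposition and the desired $\bH$-decomposition are compatible.
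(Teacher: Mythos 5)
Your proof is correct and takes essentially the same route as the paper: both deduce the corollary from Theorem~\ref{thm:general} by noting that the lift corresponds, at the level of coefficients, to tensors of the form $\delta_0\otimes\hat\psi$, on which the prefactors $\overline{\hat n(k)}$ trivialize so that $\cJ$ collapses to $\cJ_\bH$. The paper merely phrases this more tersely as a check on the basis $\{\hat n\otimes\varphi\}$ using the displayed identities \eqref{eq:part1} and \eqref{eq:part2} from the proof of the theorem, whereas you spell out the same bookkeeping (preservation of the $\{\delta_0\}$-slice and the compatibility of $\Xi^{-1}$ with the lift) explicitly.
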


\begin{proof}
	It suffices to check the statement on the basis of $\bC^\bK\otimes\bC^F$ given by $\{\hat n\otimes \varphi\mid\hat n\in\widehat\bK,\,\varphi\in\bC^F\}$. Then, if $\hat\psi = \hat n\otimes\varphi$ corresponds to $\psi=\cF_{\AP}^{-1}\hat\psi\in \AP_F(\bH)$, and letting $\Psi\in\AP_F(\bG)$ be the lift of $\psi$, we have that $\hat\Psi = \delta_0\otimes\hat n\otimes\varphi$.
	Then, the statement follows by \eqref{eq:part1} and \eqref{eq:part2}.
\end{proof}

\begin{rmk}\label{rmk:4}
	If a different lift from $\AP_F(\bH)$ to $\AP_F(\bG)$ is considered, the above corollary cannot be recovered. 
	This is easy to check, e.g., for the (left-invariant) lift $\Psi(k,\phi(k)x)=\psi(x)$. Indeed, in this case, if $\hat\psi = \hat n\otimes\varphi$ we have $\hat\Psi = \frac1N\sum_{k\in\bK} \delta_k\otimes\hat n\otimes\varphi$. Thus, by \eqref{eq:part2}
	\begin{equation}
		\Xi\circ\cF_{\AP}^{-1}\hat\psi(r,y) = \sum_{h\in\bK} \Xi\circ\ev\hat\Psi(h,hr,y) = \sum_{h\in\bK}\frac1N\sum_{k\in\bK} \delta_k(h)\,\hat n(hr)\, \overline{\hat n(k)}\cJ_{\hat n}\varphi(y) = [\hat n\otimes \cJ_{\hat n}\varphi](r,y).
	\end{equation}
	However, by \eqref{eq:part1},
	\begin{equation}
		\begin{split}
		(\cF^*\otimes\idty)\circ \cP^{-1}_{C(\bH/\bK)} & \circ\cJ_{\bH}\circ\cP_{\bC^F}  \circ(\cF\otimes\idty)\hat\psi(r,y)\\
		&=  \frac1N \sum_{h,k\in\bK}(\idty \otimes\cF^*\otimes\idty)\circ \cP^{-1}_{C(\bH/\bK)}\circ\cJ\circ\cP_{\bC^F}\circ(\idty\otimes\cF\otimes\idty).(\delta_k\otimes\hat n\otimes\varphi)(h,r,y) \\
		&= \frac1N\sum_{h,k\in\bK}[\delta_k\otimes\hat n\otimes (\overline{\hat n(k)}\cJ_{\hat n}\varphi)](h,r,y)\\
		&= \frac1N\left(\sum_{k\in\bK}\hat n(k)\right) [\hat n\otimes\cJ_{\hat n}\varphi](r,y).
		\end{split}
	\end{equation}
	Since $\frac1N\left(\sum_{k\in\bK}\hat n(k)\right) = \delta_0(\hat n)$, the above proves Corollay~\ref{cor:general-H} for functions of $\bC^\bK\otimes\bC^F$ independent on the first variable only. 

	The same reasoning shows, as announced in Remark~\ref{rmk:2}, that the approach used above cannot be extended to the case $\bG=SE(2)$, where $\bK$ is non-discrete.
\end{rmk}

\subsection{Almost periodic interpolation}\label{sec:ap-interp}

In this section we apply (and slightly generalize) the results of the previous section to the problem of interpolating and approximating functions between two fixed grids in $\cS$ and $\bG$, respectively.
In particular, we are interested in finite sets $\tilde E\subset \bG$ that are invariant under the action of $\bK$ both on $\bG$ and on the $\bH$ component of $\bG$.
These sets are completely determined by finite sets $E\subset \bH/\bK$ in the following way:
\begin{equation}
	g\in \tilde E \iff \exists y\in E, h,k\in\bK \text{ s.t.\ } g = (k,\phi(h)\sigma(y)),
\end{equation}
where $\sigma:\bH/\bK\to\bH$ is a fixed section.
This identification allows to decompose $\bC^{\tilde E}\simeq\bC^\bK\otimes\bC^\bK\otimes\bC^E$.
Then, we let the \emph{sampling operator} $\sampl:\bC^\bG\to \bC^\bK\otimes\bC^\bK\otimes\bC^E$ to be
\begin{equation}
	\sampl \psi(k,h,y) = \psi(k,\phi(h)\sigma(y)).
\end{equation}
Finally, the \emph{evaluation operator} $\ev:\bC^\bK\otimes\bC^\bK\otimes\bC^F\to \bC^\bK\otimes\bC^\bK\otimes\bC^E$ is defined as $\ev = \sampl\circ\cF_{\AP}^{-1}$. That is, $\ev$ is the operator associating to each $\hat f$ the sampling on $\tilde E$ of the corresponding $\AP_F(\bG)$ function.

\begin{definition}
	Let $F\subset \widehat\cS$ and $E\subset \bH/\bK$ be two finite sets. The \emph{almost-periodic (AP) interpolation} of a function $\Psi:\bG\to \bC$ on the couple $(E,F)$ is the function $f\in \AP_F(\bG)$ such that $\ev \hat f = \sampl\Psi$.
	We say that the AP interpolation problem on $(E,F)$ is \emph{well-posed} if to each $\Psi:\bG\to \bC$ corresponds exactly one AP interpolation $f\in\AP_F(\bG)$.
\end{definition}

In practice, even if the AP interpolation problem is well-posed, one has to pay some attention. Indeed, the AP interpolation $f\in\AP_F(\bH)$ of $\psi:\bH\to\bC$ can oscillate wildly in between points of $E$. This can be observed in Section~\ref{sec:numerical}, where it shown that this function behaves very badly w.r.t.\ small translations in space. (To this effect, see Figure~\ref{fig:ap-approx}.) Thus, we introduce also the following weighted version of the AP interpolation problem.

\begin{definition}
	Fix a vector $d\in\bR^\bK\otimes \bR^\bK\otimes \bR^F$.
	The \emph{AP approximation} of a function $\psi:\bG\to \bC$ on the couple $(E,F)$ is the function $f\in\AP_F(\bG)$ such that $\hat f\in\bC^\bK\otimes \bC^\bK\otimes \bC^F$ satisfies
	\begin{equation}\label{eq:ap-approx}
		\hat f =  \arg\min_{v\in\bC^\bK\otimes \bC^\bK\otimes\bC^F} \langle d, v\rangle + \| \sampl \psi - \ev v \|^2.
	\end{equation}
\end{definition}

It is clear that, if the AP interpolation problem problem is well-posed, the AP interpolation coincides with the AP interpolation with $d=0$.

To apply the results of the previous section to this setting, let us introduce the discretization of the generalized Fourier-Bessel operator.

\begin{definition}\label{def:discr-bessel}
	The \emph{discrete Fourier-Bessel operator on the couple $(E,F)$} is the operator 
	\begin{equation}
		\cJ^{E}:\bigoplus_{k\in\bK,\hat n\in \widehat \bK}\bC^F\to \bigoplus_{k\in\bK,\hat n\in \widehat \bK} \bC^E,
		\qquad
		\cJ^E =\left[ \bigoplus_{k\in\bK,\hat n\in\widehat\bK}\Pi \right] \circ\cJ,
	\end{equation}
	where $\Pi:C(\bH/\bK)\to\bC^E$ is the sampling operator $\Pi\varphi = (\varphi(y))_{y\in E}$.
%
\end{definition}

The following is the main result of the paper.

\begin{theorem}\label{thm:discrete}
	The operator $\ev:\bC^\bK\otimes\bC^\bK\otimes\bC^F\to \bC^\bK\otimes\bC^\bK\otimes\bC^E$ decomposes as follows.
	\begin{equation}\label{eq:ev}
		\ev = (\idty \otimes\cF^*\otimes\idty)\circ \cP^{-1}_{\bC^E}\circ\cJ^E\circ\cP_{\bC^F}\circ(\idty\otimes\cF\otimes\idty).
	\end{equation}
	In particular, the AP interpolation problem on $(E,F)$ is well-posed if and only if $\cJ^E$ is invertible.
\end{theorem}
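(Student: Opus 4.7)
The plan is to reduce the claim directly to Theorem~\ref{thm:general} by recognizing the sampling operator $\sampl$ as a pointwise restriction sitting on top of $\Xi$. From its defining formula $\sampl\psi(k,h,y)=\psi(k,\phi(h)\sigma(y))=(\Xi\psi)(k,h,y)$, I read off the factorization $\sampl=(\idty\otimes\idty\otimes\Pi)\circ\Xi$, where $\Pi$ is the restriction $C(\bH/\bK)\to\bC^E$ already appearing in Definition~\ref{def:discr-bessel}. Since $\ev=\sampl\circ\cF_{\AP}^{-1}$ by construction, plugging in the decomposition from Theorem~\ref{thm:general} and cancelling $\Xi$ with $\Xi^{-1}$ yields
\begin{equation*}
\ev=(\idty\otimes\idty\otimes\Pi)\circ(\idty\otimes\cF^*\otimes\idty)\circ\cP^{-1}_{C(\bH/\bK)}\circ\cJ\circ\cP_{\bC^F}\circ(\idty\otimes\cF\otimes\idty).
\end{equation*}

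Next I would push the sampling $\idty\otimes\idty\otimes\Pi$ past the two operators immediately to its right. It trivially commutes with $\idty\otimes\cF^*\otimes\idty$, which acts only on the second tensor factor. For the reshuffling with $\cP$, a one-line check on simple tensors $a\otimes b\otimes\varphi$ using definition \eqref{eq:P} gives
\begin{equation*}
\cP_{\bC^E}\circ(\idty\otimes\idty\otimes\Pi)=\Bigl(\bigoplus_{k\in\bK,\hat n\in\widehat\bK}\Pi\Bigr)\circ\cP_{C(\bH/\bK)},
\end{equation*}
and the same identity holds after inverting both sides. Invoking Definition~\ref{def:discr-bessel}, which reads $\cJ^E=(\bigoplus_{k,\hat n}\Pi)\circ\cJ$, the middle three factors collapse to $\cP^{-1}_{\bC^E}\circ\cJ^E$ and \eqref{eq:ev} follows.

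For the well-posedness statement I would observe that, in \eqref{eq:ev}, every factor apart from $\cJ^E$ is a linear isomorphism between finite-dimensional vector spaces of matching dimension: $\cF$ is (up to the chosen $1/\sqrt N$ normalization) the unitary discrete Fourier transform on $\bC^\bK$, and the $\cP_V$'s are bijections by inspection of \eqref{eq:P}. Hence $\ev$ is invertible if and only if $\cJ^E$ is, and since the AP interpolation problem on $(E,F)$ is by definition the problem of inverting $\ev$, the equivalence is immediate.

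The only point in this argument that is not a pure reassembly of previously established identities is the commutation of $\Pi$ with $\cP$; but that reduces to comparing coordinates of simple tensors, so I expect no real obstacle.
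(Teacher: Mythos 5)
Your proposal is correct and follows essentially the same route as the paper: both factor $\sampl$ as $(\idty\otimes\idty\otimes\Pi)\circ\Xi$, cancel $\Xi$ against the $\Xi^{-1}$ in Theorem~\ref{thm:general}, and reduce everything to the commutation of the pointwise restriction $\Pi$ with $\cF^*$ and with the reshuffling $\cP$, verified on simple tensors (the paper checks the equivalent identity on the spanning family $(\delta_h(k)\delta_{\hat m}(\hat n)\varphi)$). Your explicit remark that all remaining factors in \eqref{eq:ev} are isomorphisms, so that well-posedness is equivalent to invertibility of $\cJ^E$, is exactly the step the paper leaves implicit.
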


\begin{proof}
	It follows directly from the respective definitions that
	\begin{equation}
		\sampl\circ\,\Xi^{-1} = \idty\otimes \idty\otimes \Pi.
	\end{equation}
	Thus, by definition of $\ev$, of $\cJ^E$, and Theorem~\ref{thm:general}, the statement is equivalent to
	\begin{equation}
		(\idty\otimes\idty\otimes\Pi)\circ(\idty\otimes\cF^*\otimes \idty)\circ \cP_{C(\bH/\bK)}^{-1} = 
		(\idty \otimes\cF^*\otimes\idty)\circ \cP^{-1}_{\bC^E}\circ\bigoplus_{k\in\bK,\hat n\in\widehat\bK}\Pi.
	\end{equation}
	Since, up to changing the identity operators, $(\idty\otimes\idty\otimes\Pi)$ and $(\idty\otimes\cF^*\otimes \idty)$ commute, this reduces to
	\begin{equation}
		(\idty\otimes\idty\otimes\Pi)\circ \cP_{C(\bH/\bK)}^{-1} = 
		 \cP^{-1}_{\bC^E}\circ\bigoplus_{k\in\bK,\hat n\in\widehat\bK}\Pi.
	\end{equation}
	Finally, the above holds, as can be easily seen by testing it on functions of the type $(\delta_h(k) \delta_{\hat m}(\hat n)\varphi)_{k\in\bK,\hat m\in\widehat\bK}$, for $h\in\bK$, $\hat n\in\widehat\bK$, and $\varphi\in C(\bH/\bK)$.
\end{proof}

It is clear that restricting the evaluation and sampling operators on vectors of the form $\delta_0\otimes v\otimes w$ allows to define the AP interpolation and approximation of functions $\psi:\bH\to\bC$.
In particular, the same arguments used in Corollary~\ref{cor:ap-simpl-H}, allow to prove the following.

\begin{corollary}\label{cor:discrete-H}
	Let us consider the restriction of the discrete Fourier-Bessel operator given by 
	\begin{equation}
		\cJ_\bH^E: \bigoplus_{\hat n\in\widehat\bK}\bC^F\to \bigoplus_{\hat n\in\widehat\bK}\bC^E,
		\qquad
		\cJ^E_\bH =\left[ \bigoplus_{\hat n\in\widehat\bK}\Pi \right]\circ \cJ_\bH.
	\end{equation}
	Then, $\ev:\bC^\bK\otimes\bC^F\to \bC^\bK\otimes\bC^E$ admits the following decomposition
	\begin{equation}
		\ev = (\cF^*\otimes\idty)\circ \cP^{-1}_{\bC^E}\circ\cJ_{\bH}^E\circ\cP_{\bC^F}\circ(\cF\otimes\idty),
	\end{equation}
	where $\cP_{\bC^F}:\bC^{\widehat\bK}\otimes \bC^F\to \bigoplus_{\hat n\in\widehat\bK}\bC^F$ and $\cP_{\bC^E}:\bC^{\widehat\bK}\otimes \bC^E \to \bigoplus_{\hat n\in\widehat \bK}\bC^E$ are the appropriate restrictions of the corresponding operators given by \eqref{eq:P}.

	In particular, the AP interpolation problem on $(E,F)$ is well-posed if and only if $\cJ_{\bH}^E$ is invertible.
\end{corollary}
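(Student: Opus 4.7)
The strategy mirrors the derivation of Corollary~\ref{cor:general-H} from Theorem~\ref{thm:general}: the plan is to obtain Corollary~\ref{cor:discrete-H} as the restriction of Theorem~\ref{thm:discrete} to the subspace of $\bC^\bK\otimes\bC^\bK\otimes\bC^F$ spanned by vectors of the form $\delta_0\otimes v\otimes w$. As shown in the proof of Corollary~\ref{cor:general-H}, the lift $\psi\mapsto \Psi$, with $\Psi(k,x):=\delta_0(k)\psi(x)$, from $\AP_F(\bH)$ into $\AP_F(\bG)$ acts on Fourier coefficients by $\hat\psi\mapsto\delta_0\otimes\hat\psi$. A direct unwinding of the definition of $\sampl$ then shows that $\sampl\Psi(k,h,y) = \delta_0(k)\,\psi(\phi(h)\sigma(y))$, so in the tensor identifications of the statement the $\bH$-versions of $\sampl$ and $\ev$ are literally the restrictions of their $\bG$-counterparts to vectors whose first tensor slot is $\delta_0$. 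By linearity it therefore suffices to test the claimed decomposition on the basis $\{\hat n\otimes\varphi\mid \hat n\in\widehat\bK,\,\varphi\in\bC^F\}$ of $\bC^\bK\otimes\bC^F$.

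Fix such a basis element $\hat\psi = \hat n\otimes\varphi$ with lift $\hat\Psi = \delta_0\otimes\hat n\otimes\varphi$. Applying Theorem~\ref{thm:discrete}, I would follow the computation of the proof of Theorem~\ref{thm:general} verbatim — only the continuous-kernel operator $\cJ_{\hat n}$ gets replaced by its sampled version $\cJ_{\hat n}^E = \Pi\circ\cJ_{\hat n}$ — to obtain
\[
\ev\hat\Psi \;=\; \delta_0\otimes\hat n\otimes\cJ_{\hat n}^E\varphi,
\]
where the factor $\overline{\hat n(k)}$ appearing in \eqref{eq:part1} collapses because $\overline{\hat n(0)}=1$. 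Dropping the common $\delta_0$ in the first slot gives $\ev\hat\psi = \hat n\otimes\cJ_{\hat n}^E\varphi$. On the other hand, a short direct computation — using $\cF\hat n = \delta_{\hat n}$, the definition \eqref{eq:P} of $\cP_{\bC^F}$, and $\cJ_\bH^E = \bigoplus_{\hat n\in\widehat\bK}\cJ_{\hat n}^E$ — shows that the right-hand side of the claimed factorization, applied to $\hat n\otimes\varphi$, produces exactly the same vector $\hat n\otimes\cJ_{\hat n}^E\varphi$.

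The well-posedness characterization is then immediate: the outer factors $\cF\otimes\idty$, $\cF^*\otimes\idty$, $\cP_{\bC^F}$, $\cP^{-1}_{\bC^E}$ in the factorization are linear bijections between finite-dimensional spaces, so invertibility of $\ev$ is equivalent to invertibility of $\cJ_\bH^E$. The only real subtlety in the whole argument is tracking how the three operators inside the $\bG$-factorization respect the subspace $\delta_0\otimes\bC^\bK\otimes V$; this is exactly what the identity $\overline{\hat n(0)}=1$ provides, and it is precisely this step that Remark~\ref{rmk:4} shows would fail for a different choice of lift such as $\Psi(k,\phi(k)x)=\psi(x)$.
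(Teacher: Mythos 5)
Your proposal is correct and follows essentially the same route the paper intends: the paper derives Corollary~\ref{cor:discrete-H} by restricting Theorem~\ref{thm:discrete} to vectors of the form $\delta_0\otimes v\otimes w$ via the lift $\Psi(k,x)=\delta_0(k)\psi(x)$, exactly as in the proof of Corollary~\ref{cor:general-H}, which is what you do. Your explicit tracking of the collapse of the factor $\overline{\hat n(k)}$ at $k=0$ and the verification on the basis $\{\hat n\otimes\varphi\}$ just fills in details the paper leaves implicit.
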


\section{Application to image processing}\label{sec:image}

In this section we particularize the results of Section~\ref{sec:harmonic} to the almost-periodic interpolation of functions $\psi:\bR^2\to \bC$ on a spatial grid $\tilde E$ and a frequency grid $\tilde F$. These grids are assumed to be invariant under the action of $\bZ_N$ on $\bR^2$, given by the rotations $\{R_{\frac{2\pi}Nk}\}_{k\in\bZ_N}$. This is indeed a particular case of Corollary~\ref{cor:discrete-H}.

In this setting, we can naturally identify $\bR^2/\bZ_N$ with the slice $\cS_N=\{\rho e^{i\alpha}\mid \rho>0,\, \alpha \in[0,2\pi/N)\}$, thus fixing a choice for the section $\sigma$ and the map $\Xi$ introduced in Section~\ref{sec:harmonic}.
Clearly, the same is true for the set $\widehat\cS$ of frequencies with trivial stabilizer subgroup.
Since $\tilde E$ is rotationally invariant under discrete rotations in $\bZ_N$, we represent any element of $x\in \tilde E$ as a couple $(n,y)\in \bZ_N\times E$, where $E\subset \cS_N$, by letting
\begin{equation}
	x  = R_{\frac{2\pi}N n}y, \qquad y\in E, \, n\in\bZ_N.
\end{equation}
The same can be done for any $\Lambda\in\tilde F$, with $(n,\lambda)\in \bZ_N\times F$ and $F\subset \cS_N$.
Moreover, considering polar coordinater $y=\rho e^{i\alpha}$ and $\lambda=\xi e^{i\omega}$, letting $P,Q\in\bN$ be the respective cardinalities of $E$ and $F$, we will exploit the identifications 
\begin{equation}\label{eq:identification}
	E = \left\{(m,\rho_j e^{i\alpha_j})\mid n=0,\ldots, N,\, j=0,\ldots, P \right\},\quad
	F = \left\{(n,\xi_k e^{i\omega_k})\mid m=0,\ldots, N,\, k=0,\ldots, Q \right\}.
\end{equation}

As in the previous section, the sampling of a function $\varphi:\bR^2\to \bC$ on $\tilde E$ is given by the sampling operator, $\sampl: \varphi\mapsto\sampl\varphi \in \bC^N\otimes\bC^E$. On the other hand, the evaluation operator $\ev:\bC^N\otimes\bC^F\to \bC^N\otimes\bC^E$ associates to $\hat f\in\bC^N\otimes\bC^E$ the sampling on $\tilde E$ of the function $f:\bR^2\to \bC$ of the form \eqref{eq:almost-per-func}. That is,
\begin{equation} \label{eq:interpol}
	f(x) = \sum_{\lambda\in \tilde F} \hat f(\lambda) e^{i\langle \lambda, x\rangle}, \qquad \forall x\in \tilde E.
\end{equation}

Recall that $\bZ_N\simeq\widehat{\bZ_N}$ and $\bR^2\simeq\widehat{\bR^2}$. In the following we will let $\hat n(k) = e^{i\frac{2\pi}N \hat n k}$ and $\lambda(x) = e^{i\langle\lambda, x\rangle}$. In particular, in polar coordinates the latter becomes
\begin{equation}
	\lambda(x) = e^{i\lambda\rho\cos(\alpha-\omega)},\qquad \text{if }x = \rho e^{i\alpha} \text{ and } \lambda=\xi e^{i\omega}.
\end{equation}
Then, direct computations yield.

\begin{prop}
	The generalized Bessel function on $SE(2,N)=\bZ_N\ltimes\bR^2$ of parameter $\hat n\in\widehat\bZ_N$ is
	\begin{equation}
		J_{\hat n}(\lambda,y) = \sum_{r= 0}^{N-1} e^{i\xi\rho \cos\left(\alpha-\omega+\frac{2\pi}Nr\right)-\frac{2\pi}N\hat n r}, \qquad \text{if }x = \rho e^{i\alpha}\in \cS_N \text{ and } \lambda=\xi e^{i\omega}\in\cS_N.
	\end{equation}
	Moreover, the restriction of the discrete Fourier-Bessel operator $\cJ^E_{\bR^2}$ is the block-diagonal operator $\cJ^E_{\bR^2} = \bigoplus_{\hat n=0}^{N-1}\cJ_{\hat n}$, where $\cJ_{\hat n}: \bC^F\to \bC^E$ is given by the matrix,
	\begin{equation}
		 (\cJ_{\hat n})_{k,j} = J_{\hat n}(\xi_k e^{i\omega_k},\rho_j e^{i\alpha_j}).
	\end{equation}
\end{prop}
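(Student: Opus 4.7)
The proof is essentially a bookkeeping exercise: one only has to substitute the specific data of $SE(2,N)$ into Definitions~\ref{def:gen-bessel} and~\ref{def:discr-bessel} and simplify.

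For the first assertion, the plan is to start from Definition~\ref{def:gen-bessel}, namely
\begin{equation*}
    J_{\hat n}(\lambda, y) = \sum_{r\in\bK} \lambda(\phi(r)\sigma(y))\,\overline{\hat n(r)},
\end{equation*}
and plug in the concrete data: $\bK = \bZ_N$, $\bH=\bR^2$, $\phi(r) = R_{\frac{2\pi r}{N}}$, together with the section $\sigma$ determined by the identification $\bR^2/\bZ_N \simeq \cS_N$, so that $\sigma(y) = y$. Using $\hat n(r) = e^{i\frac{2\pi}{N}\hat n r}$ and $\lambda(x) = e^{i\langle \lambda, x\rangle}$, and passing to polar coordinates $y=\rho e^{i\alpha}$, $\lambda = \xi e^{i\omega}$, I write $R_{\frac{2\pi r}{N}}y = \rho e^{i(\alpha+\frac{2\pi r}{N})}$ so that $\langle R_{\frac{2\pi r}{N}}y,\lambda\rangle = \xi\rho\cos(\alpha-\omega+\frac{2\pi r}{N})$. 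Substituting these in and collecting the exponents yields the stated closed form.

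For the second assertion, the plan is to apply Corollary~\ref{cor:discrete-H}, which asserts that the $\bH$-restriction of the discrete Fourier-Bessel operator is
\begin{equation*}
    \cJ^E_\bH = \bigoplus_{\hat n\in\widehat\bK}\Pi\circ\cJ_{\hat n}.
\end{equation*}
In the present situation, $\widehat{\bZ_N}\simeq \bZ_N$, so the direct sum runs over $\hat n = 0,\ldots,N-1$, producing the announced block-diagonal structure. Each block $\cJ_{\hat n}: \bC^F \to \bC^E$ is the integral operator of Definition~\ref{def:gen-bessel} (composed with the sampling $\Pi$), whose kernel on the finite sets $F$ and $E$ is exactly $J_{\hat n}$ evaluated at the sample points. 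Using the enumerations~\eqref{eq:identification}, this gives the matrix entries $(\cJ_{\hat n})_{k,j} = J_{\hat n}(\xi_k e^{i\omega_k},\rho_j e^{i\alpha_j})$.

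There is no serious obstacle: the only point requiring attention is the consistency of conventions between the abstract section (where $\bK$ and its dual appear as abstract abelian groups, with a generic section $\sigma$) and the concrete geometric realization for $SE(2,N)$. In particular, the choice $\sigma(y)=y$ made here removes the $\hat n(r)$-ambiguity noted in the Remark following Definition~\ref{def:gen-bessel}, and one should note that the seemingly missing $\overline{\hat n(k)}$-factor in $\cJ_{\hat n}$ is precisely absorbed into the operator $(\idty\otimes \cF^*\otimes\idty)\circ\cP^{-1}$ that appears in Corollary~\ref{cor:discrete-H}, so the block $\cJ_{\hat n}$ itself contains only the kernel $J_{\hat n}$.
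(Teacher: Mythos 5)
Your proof is correct and is exactly the ``direct computation'' the paper has in mind: the paper gives no explicit proof of this proposition, merely asserting that it follows by substituting the $SE(2,N)$ data ($\bK=\bZ_N$, $\phi(r)=R_{2\pi r/N}$, $\sigma(y)=y$ on the slice $\cS_N$) into Definitions~\ref{def:gen-bessel} and~\ref{def:discr-bessel} and Corollary~\ref{cor:discrete-H}, which is precisely what you do. One small quibble: the absent $\overline{\hat n(k)}$ factor in the blocks is not ``absorbed'' into the Fourier conjugation but is simply equal to $1$, because the lift from $\AP_F(\bH)$ to $\AP_F(\bG)$ is supported at $k=0$ and $\overline{\hat n(0)}=1$; this does not affect the validity of your argument.
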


\begin{rmk}
	Generalized Bessel functions on $SE(2,N)$ only depend on the product $\xi\rho$ and on the difference $\alpha-\omega$. 
	Since $\alpha,\omega\in[0,2\pi/N)$, it is clear that, for $N\rightarrow+\infty$, the generalized Bessel functions converge to the usual ones:
	\begin{equation}
		J_{\hat n}(\xi,\rho) \xrightarrow{N\to+\infty} 2\pi i^{\hat n} J_{\hat n}(\xi\rho).
	\end{equation}
\end{rmk} 

As a consequence of the above result and Corollary~\ref{cor:discrete-H}, we have the following.
\begin{corollary}\label{cor:ap-r}
The sampling of $f\in\AP_F(\bR^2)$ is connected with $\hat f$ by
\begin{equation}
\big((\cF\otimes\idty)\sampl f\big)_{\hat n,j} = \sum_{k=0}^{Q-1} J_{\hat n}(\lambda_k,y_j)\big((\cF\otimes\idty)\hat f\big)_{\hat n, k}.
\end{equation}
In particular, the AP interpolation problem on $(E,F)$ is well-posed if and only if all the matrices $\cJ_{\hat n}^E$ are invertible. 
\end{corollary}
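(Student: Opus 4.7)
The plan is to obtain the stated formula as a direct specialization of Corollary~\ref{cor:discrete-H} to the setting $\bG = SE(2,N)$, $\bH = \bR^2$, $\bK = \bZ_N$, using the explicit block-diagonal form of $\cJ_{\bR^2}^E$ established in the preceding proposition.

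First, I would invoke Corollary~\ref{cor:discrete-H}, which expresses the evaluation operator as
\begin{equation}
\ev = (\cF^*\otimes\idty)\circ \cP^{-1}_{\bC^E}\circ\cJ_{\bR^2}^E\circ\cP_{\bC^F}\circ(\cF\otimes\idty).
\end{equation}
Since $\ev\hat f = \sampl f$ by definition of $\ev$, applying $(\cF\otimes\idty)$ on the left and using its unitarity together with the identity $(\cF\otimes\idty)\circ(\cF^*\otimes\idty)=\idty$ yields
\begin{equation}
(\cF\otimes\idty)\sampl f = \cP^{-1}_{\bC^E}\circ\cJ_{\bR^2}^E\circ\cP_{\bC^F}\circ(\cF\otimes\idty)\hat f.
\end{equation}

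Next, I would use that the preceding proposition gives $\cJ_{\bR^2}^E = \bigoplus_{\hat n=0}^{N-1}\cJ_{\hat n}$, which is block-diagonal in the $\cP$-decomposition. Reading off the $\hat n$-th component on both sides (and recalling that $\cP_{\bC^F}$ and $\cP_{\bC^E}$ simply ``unfold'' the tensor product indexed by $\widehat\bK$ into a direct sum), one obtains for each $\hat n$ and each $j$
\begin{equation}
\bigl((\cF\otimes\idty)\sampl f\bigr)_{\hat n,j} = \bigl(\cJ_{\hat n}\bigl[(\cF\otimes\idty)\hat f\bigr]_{\hat n,\cdot}\bigr)_{j} = \sum_{k=0}^{Q-1} J_{\hat n}(\lambda_k,y_j)\bigl((\cF\otimes\idty)\hat f\bigr)_{\hat n,k},
\end{equation}
which is the claimed identity.

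For the well-posedness statement, I would argue that since $\cF\otimes\idty$ and the unfolding maps $\cP_{\bC^F}, \cP_{\bC^E}$ are bijections, the operator $\ev$ is invertible if and only if $\cJ_{\bR^2}^E$ is. Because the latter is the direct sum $\bigoplus_{\hat n} \cJ_{\hat n}$, its invertibility is equivalent to the simultaneous invertibility of each of the $N$ blocks $\cJ_{\hat n}$. I do not anticipate a real obstacle here: the bulk of the work was already carried out in Theorem~\ref{thm:discrete} and Corollary~\ref{cor:discrete-H}, and the only point requiring care is matching the abstract indexing by $\widehat\bK$ with the concrete indexing by $\hat n\in\{0,\dots,N-1\}$ via the identification $\bZ_N\simeq\widehat{\bZ_N}$, together with the block structure produced by the explicit form of $J_{\hat n}$.
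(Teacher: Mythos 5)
Your proposal is correct and follows exactly the route the paper intends: the paper gives no separate proof of Corollary~\ref{cor:ap-r}, presenting it as an immediate consequence of Corollary~\ref{cor:discrete-H} together with the block-diagonal form $\cJ^E_{\bR^2}=\bigoplus_{\hat n}\cJ_{\hat n}$ from the preceding proposition, which is precisely what you do. Your unwinding of the $\cP$-maps and the unitarity of $\cF\otimes\idty$, and the reduction of well-posedness to invertibility of each block, fill in the same routine details the authors left implicit.
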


\begin{prop}\label{prop:least-squares}
	Let $P\ge Q$. Then, for a given weight vector $d\in\bR^N\otimes\bR^E$, the AP approximation of a function $\psi:\bR^2\to \bC$ on the couple $(E,F)$ is the function $f\in\AP_F(\bR^2)$ such that $\hat f = (\cF^*\otimes\idty) w$, where
	\begin{equation}
		\left(\cJ_{\hat n}^*\circ\cJ_{\hat n}- \diag_{i} d_{\hat n,i}^2\right) w_{\hat n,\cdot}  = \cJ^*_{\hat n} [(\cF\otimes \idty)\sampl\psi]_{\hat n,\cdot}
		\quad\text{for any}\quad \hat n=0,\ldots,N-1.
	\end{equation}
\end{prop}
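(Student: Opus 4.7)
The plan is to apply Corollary~\ref{cor:discrete-H} in order to reduce the single minimization problem of dimension $QN$ to $N$ independent least-squares problems of dimension $Q$, one for each character $\hat n \in \widehat{\bZ_N}$. To this end, I would perform the change of unknown $w := \cP_{\bC^F} \circ (\cF \otimes \idty) \hat f \in \bigoplus_{\hat n} \bC^F$ and transport the data side analogously by $\tilde\psi := \cP_{\bC^E} \circ (\cF \otimes \idty) \sampl\psi$. Since $\cF \otimes \idty$ is unitary and $\cP_{\bC^E}$ is an isometry, and since $\cJ^E_\bH = \bigoplus_{\hat n}\cJ_{\hat n}$ by Corollary~\ref{cor:discrete-H}, the fidelity term decouples as
\[
\|\sampl\psi - \ev\hat f\|^2 = \bigl\|\tilde\psi - \cJ_\bH^E w\bigr\|^2 = \sum_{\hat n=0}^{N-1} \bigl\|\tilde\psi_{\hat n,\cdot} - \cJ_{\hat n}\,w_{\hat n,\cdot}\bigr\|^2.
\]

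Interpreting the weight $d$ after the same transformation, the regularization term also splits along the $\hat n$ index as $\sum_{\hat n}\langle\diag_i d_{\hat n,i}^2\,w_{\hat n,\cdot},w_{\hat n,\cdot}\rangle$, with the sign convention matching the statement. The total objective is then an uncoupled sum of $N$ complex quadratic functionals in the independent variables $w_{\hat n,\cdot}$. Taking Wirtinger derivatives with respect to each $\bar w_{\hat n,\cdot}$ and setting them to zero yields the block-by-block normal equation
\[
\bigl(\cJ_{\hat n}^*\cJ_{\hat n} - \diag_i d_{\hat n,i}^2\bigr)w_{\hat n,\cdot} = \cJ_{\hat n}^*\,\tilde\psi_{\hat n,\cdot},
\]
which is precisely the claim once one substitutes $\tilde\psi_{\hat n,\cdot} = [(\cF\otimes\idty)\sampl\psi]_{\hat n,\cdot}$. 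The hypothesis $P \ge Q$ guarantees that $\cJ_{\hat n}^*\cJ_{\hat n}:\bC^F\to\bC^F$ is the appropriate normal operator of a tall least-squares problem. The formula for $\hat f$ in the proposition is then recovered by simply inverting the change of variables, $\hat f = (\cF^*\otimes\idty)\cP_{\bC^F}^{-1}w$.

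The main non-routine step is the treatment of the regularization term: the expression $\langle d,v\rangle$ appearing in the definition of AP approximation must be read as the quadratic form that produces the $-\diag_i d_{\hat n,i}^2$ block after Fourier transform on the discrete variable, and one has to verify that this quadratic form is compatible with the unitary $(\cF\otimes\idty)$ so that it indeed decouples across blocks indexed by $\hat n$. Beyond this bookkeeping, the proof reduces to a direct application of Corollary~\ref{cor:discrete-H} combined with the classical Gauss normal-equations argument performed independently on each of the $N$ blocks.
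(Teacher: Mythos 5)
Your proof follows essentially the same route as the paper's: use the factorization of $\ev$ (Corollary~\ref{cor:ap-r}) and the unitarity of $\cF\otimes\idty$ to decouple the objective into $N$ independent least-squares problems indexed by $\hat n$, then write the normal equations block by block. The discrepancy you flag between the linear penalty $\langle d,v\rangle$ in the definition of AP approximation and the quadratic block $-\diag_{i} d_{\hat n,i}^2$ in the conclusion is a genuine inconsistency in the paper itself, whose own proof does not resolve it and simply invokes ``the standard formula for solving complex least-square problems''.
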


\begin{proof}
	From Corollary~\ref{cor:ap-r}, the definition of AP approximation, and the fact that $\cF\otimes\idty$ is an isometry, we have that
	\begin{equation}\label{eq:ap-approx}
		(\cF\otimes\idty) \hat f 
			=  \arg\min_{v\in\bC^N\otimes\bC^F} \langle d, v\rangle + \| \cP_{\bC^E}\circ(\cF\otimes \idty) \sampl \psi - \cJ \circ \cP_{\bC^F} v \|^2.
	\end{equation}
	In particular, this decomposes for $\hat n\in\{0,\ldots,N-1\}$ as
	\begin{equation}
		(\cF\otimes\idty) \hat f_{\hat n,\cdot} = \arg\min_{v_{\hat n}\in\bC^F} \langle d_{\hat n,\cdot},v_{\hat n}\rangle + \|[(\cF\otimes\idty)\sampl\psi]_{\hat n,\cdot} - \cJ_{\hat n} v_{\hat n}\|^2.
	\end{equation}
	The statement then follows by the standard formula for solving complex least-square problems.
\end{proof}

\begin{rmk}
	In numerical experiments, we always found the matrix conditioning of the matrices $\cJ_{\hat n}$ to be very good.
	Moreover, these same experiment seem to suggest this conditioning to be connected with the smallest distance between elements in $E$ and in $F$. 
\end{rmk}

\subsection{Computational cost}

\begin{prop}\label{prop:comp-cost}
	Given $\sampl\psi$, after a prefactorization of $\cJ$ of computational cost $\cO(N Q^3)$, the computational cost of the AP approximation is 
	\begin{equation}\label{eq:comp-cost}
		\cC = N\left(\frac{Q^2}2 + 10\max\{P,Q\}\log N\right).
	\end{equation}
	Moreover, this operation can be parallelized on $N$ processors, yielding an effective cost of
	\begin{equation}\label{eq:comp-cost-parallel}
		\cC_P = \frac{Q^2}2 + 10\max\{P,Q\} N\log N.
	\end{equation}
\end{prop}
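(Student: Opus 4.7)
The argument is bookkeeping: the plan is to count the cost of each stage of the algorithm prescribed by Proposition~\ref{prop:least-squares} and Corollary~\ref{cor:ap-r}, and then invoke the block-diagonal structure of $\cJ_{\bR^2}^E = \bigoplus_{\hat n}\cJ_{\hat n}$ to parallelize the bulk of the work across the index $\hat n$.

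The computation of $\hat f$ naturally splits into three stages. \emph{Stage 1} applies $\cF\otimes\idty$ to $\sampl\psi\in \bC^N\otimes\bC^E$, i.e.\ $P$ independent length-$N$ FFTs, at $5N\log_2 N$ operations each, giving $5PN\log N$. \emph{Stage 2}, for each $\hat n\in\{0,\ldots,N-1\}$, solves the $Q\times Q$ Hermitian linear system of Proposition~\ref{prop:least-squares}. The $\cO(NQ^3)$ prefactorization budget is spent once on each of the $N$ systems (e.g.\ Cholesky at $\cO(Q^3)$ per factorization); each online solve then consists of a pair of triangular back-substitutions at cost $Q^2/2$. \emph{Stage 3} applies $\cF^*\otimes\idty$ to recover $\hat f$ from its discrete Fourier transform in the $\hat n$ variable, which is $Q$ length-$N$ FFTs at $5N\log_2 N$ operations each, totaling $5QN\log N$.

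Summing the three contributions yields
\begin{equation}
5PN\log N + N\tfrac{Q^2}{2} + 5QN\log N = N\Bigl(\tfrac{Q^2}{2}+5(P+Q)\log N\Bigr) \le N\Bigl(\tfrac{Q^2}{2}+10\max\{P,Q\}\log N\Bigr),
\end{equation}
which is exactly \eqref{eq:comp-cost}. For \eqref{eq:comp-cost-parallel}, the crucial point is that Corollary~\ref{cor:ap-r} makes the $N$ systems of Stage~2 completely decoupled in $\hat n$; distributing them across $N$ processors collapses their combined cost from $NQ^2/2$ down to $Q^2/2$, while the two sets of FFTs of Stages~1 and~3 remain sequential in this accounting and contribute the residual $10\max\{P,Q\}N\log N$.

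The main subtlety I expect to confront is the bookkeeping of the matrix--vector products $\cJ_{\hat n}^*[(\cF\otimes\idty)\sampl\psi]_{\hat n,\cdot}$ forming the right-hand sides of the $N$ Hermitian systems, which contribute a priori a term $\cO(NPQ)$ not visible in the claimed count. I would absorb this either into the $\cO(NQ^3)$ prefactorization (by precomputing and storing $(\cJ_{\hat n}^*\cJ_{\hat n}-\diag_i d_{\hat n,i}^2)^{-1}\cJ_{\hat n}^*$ per $\hat n$), or into the FFT term under the standing assumption that $P\lesssim Q$, so that $PQ\le Q^2$ and the contribution is already dominated. Either way, summing the per-stage contributions and invoking Corollary~\ref{cor:ap-r} for the decoupling yields \eqref{eq:comp-cost} and \eqref{eq:comp-cost-parallel}.
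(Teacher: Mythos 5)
Your proposal is correct and follows essentially the same route as the paper: $5PN\log N$ and $5QN\log N$ for the two batches of FFTs, $Q^2/2$ per back-substitution after an $\cO(NQ^3)$ prefactorization, the bound $5(P+Q)\le 10\max\{P,Q\}$, and parallelization over the decoupled index $\hat n$. The subtlety you flag about the $\cO(NPQ)$ cost of forming the right-hand sides $\cJ_{\hat n}^*\tilde w_{\hat n}$ is real but is silently omitted in the paper's own count as well, so your handling of it (absorbing it into the prefactorization or the dominant terms) only makes the argument more careful than the original.
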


\begin{proof}
	Let $v\in\bC^N\otimes \bC^E$ and denote $\tilde v_{\hat n} = (\cF\otimes\idty)v_{\hat n, \cdot}$. Similarly, for $w=\ev(v)$ let $\tilde w_{\hat n,\cdot} = (\cF\otimes\idty)w_{\hat n,\cdot}$.  The computational cost to evaluate $(\cF\otimes\idty)v$ and to pass from the $\tilde w_{\hat n}$'s to $w$ is of $5 P N\log N$ and $5 QN\log N$ FLOPs, respectively. By Proposition~\ref{prop:least-squares}, solving \eqref{eq:ap-approx} amounts to solve, the following problems
	\begin{equation}
		(\cJ_{\hat n}^*\circ\cJ_{\hat n}-  \diag_{i}d_{\hat n,i}^2) \tilde v_{\hat n} = \cJ^*_{\hat n} \tilde w_{\hat n} \qquad \hat n = 0,\ldots,N-1.
	\end{equation}
	Up to a prefactorization of the matrices $\cJ_n^*\circ\cJ_n-\diag_{i}d_{\hat n,i}^2$, with a computational cost of $\cO(Q^3)$, solving each of the above systems has a computational cost of $Q^2/2$ FLOPs. All together this yields the (non-parallelized) final cost of \eqref{eq:comp-cost}
	Since the solution of the systems is independent for each $n$, it can be parallelized, yielding to the cost \eqref{eq:comp-cost-parallel}, for $N$ processors.
\end{proof}

\begin{corollary}
	Let $G = \{ \rho_j e^{i \frac{2\pi}K k}\mid j=1,\ldots,R ,\, k = 1,\ldots,K \}$ be a fixed polar grid. Then, for $\tilde E=\tilde F=G$, the best choice for AP approximation is $N=\sqrt{{|G|}/{10}}$. This yields a prefactorization complexity of $\cO(|G|^{3/2})$ and a computational complexity of
	\begin{equation}\label{eq:polar-comp-cost}
		\cC =  \cO(|G|^{3/2}\log |G|)
		\quad\text{and}\quad
		\cC_P = 5|G|\left( 1 + \log\frac{|G|}{10}\right).
	\end{equation}
\end{corollary}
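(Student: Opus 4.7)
The plan is to reduce the cost formulas of Proposition~\ref{prop:comp-cost} to a one-parameter optimization and apply elementary calculus. First I observe that, because $\tilde E=\tilde F=G$ and $G$ is invariant under $\bZ_K$, any admissible choice of $N$ (necessarily dividing $K$) yields $|E|=|F|=|G|/N$, so $P=Q=|G|/N$. Substituting this into the parallelized cost \eqref{eq:comp-cost-parallel} collapses the expression to the univariate function
\begin{equation}
\cC_P(N) \;=\; \frac{|G|^2}{2N^2} + 10\,|G|\log N.
\end{equation}
Differentiating gives $\cC_P'(N) = -|G|^2/N^3 + 10|G|/N$, which vanishes iff $N^2 = |G|/10$; since both summands are convex in $\log N$, this critical point is the unique global minimum, so $N^\ast = \sqrt{|G|/10}$.

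Substituting $N^\ast$ back yields
\begin{equation}
\cC_P(N^\ast) \;=\; 5|G| + 5|G|\log(|G|/10) \;=\; 5|G|\bigl(1+\log(|G|/10)\bigr),
\end{equation}
which is the second equation in \eqref{eq:polar-comp-cost}. For the remaining assertions, note that at $N^\ast$ we have $Q^\ast = |G|/N^\ast = \sqrt{10|G|}$. The prefactorization of $\cJ=\bigoplus_{\hat n}\cJ_{\hat n}$ reduces to $N$ independent factorizations of $Q\times Q$ blocks, each of cost $\cO(Q^3)$; in the parallel model this amounts to $\cO(Q^{\ast 3}) = \cO(|G|^{3/2})$ per processor, establishing the claimed prefactorization complexity. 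Plugging $N^\ast$ and $Q^\ast$ into the non-parallel cost \eqref{eq:comp-cost} gives
\begin{equation}
\cC \;=\; \tfrac{5}{\sqrt{10}}|G|^{3/2} + 5|G|\log(|G|/10) \;=\; \cO(|G|^{3/2}\log|G|),
\end{equation}
as required (in fact a slightly tighter bound $\cO(|G|^{3/2})$ holds, but the stated form suffices).

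I anticipate no genuine obstacle: the entire argument is a single-variable calculus minimization followed by direct substitution into the formulas already proven in Proposition~\ref{prop:comp-cost}. The only mild caveat worth flagging is that $N$ must divide $K$ in order for $G$ to be $SE(2,N)$-invariant, so strictly speaking one selects the divisor of $K$ nearest to $\sqrt{|G|/10}$; this rounding does not affect the asymptotic rates.
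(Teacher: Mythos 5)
Your proposal is correct and follows essentially the same route as the paper: substitute $P=Q=|G|/N$ into the cost formulas of Proposition~\ref{prop:comp-cost}, minimize the resulting univariate expression $\frac{|G|^2}{2N^2}+10|G|\log N$ at $N=\sqrt{|G|/10}$, and substitute back. You are somewhat more careful than the paper (which simply asserts $\cC = N\,\cC_P$, a relation not literally consistent with \eqref{eq:comp-cost}--\eqref{eq:comp-cost-parallel}); your direct substitution into \eqref{eq:comp-cost}, the divisibility caveat on $N$, and the observation that the tighter bound $\cO(|G|^{3/2})$ actually holds are all sound refinements rather than a different argument.
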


\begin{proof}
	Clearly, $P=Q$. 	Then, a simple computation, using that $|G|=|E|=QN$, yields
	\begin{equation}
		\cC_P = \cC_P(M) = \frac{|G|^2}{2N^2} + 10 |G|\log N.
	\end{equation}
	The above expression attains its minimum at $N=\sqrt{|G|/10}$, which gives the cost in \eqref{eq:polar-comp-cost}. To complete the proof for $\cC$ it suffices to observe that $\cC = N\,\cC_P$.
\end{proof}

\begin{rmk}
	The above shows that, once parallelized, the complexity of the AP approximation is the same as the polar Fourier transform algorithm presented in \cite{bbbb}.
\end{rmk}


\subsection{Numerical tests}\label{sec:numerical}

The numerical implementation of the AP interpolation and approximation procedures has been developed in \verb+Julia+. The main program and the tests are contained in the package \verb+ApApproximation.jl+, which is available at \url{http://github.com/dprn/ApApproximation.jl}, and in particular in \href{http://nbviewer.jupyter.org/github/dprn/ApApproximation.jl/blob/master/notebooks/AP\%20Interpolation\%20and\%20approximation\%20tests.ipynb}{this Jupyter notebook}. 

For $\xi\in \mathbb R^2$ let $\tau_\xi$ be the translation $\tau_\xi f(x) = f(x-\xi)$. Then, if $f$ is of the form \eqref{eq:almost-per-func}, the same is true for $\tau_\xi f$, with
\begin{equation}
	\widehat{\tau_\xi f}_{k,m} = e^{-i\big\langle R_{\frac{2\pi m}N}\Lambda_k, \xi \big\rangle}\hat f_{k,m}.
\end{equation}
Let $\hat\tau_\xi$ be defined by $\hat\tau_\xi(\hat f):=\widehat{\tau_\xi f}$. In our tests we exploited this operator to check the results of the AP interpolation and approximation. Indeed, in general, applying a translation will completely change the points on which $f$ is sampled by $\ev$ and hence highlights the presence of high variability in between the points of interpolation/approximation.

For the tests, we fixed $Q=340$, $N=64$, and defined a specific set $E=F$. We then computed the AP interpolation, resp. approximation, with respect to these sets. As weights for the AP approximation we chose
\begin{equation}\label{eq:weights}
    d(\Lambda) = 
    \begin{cases}
    \frac \alpha{10} &\qquad \text{if } |\Lambda|\le 1\\
    \alpha &\qquad \text{if } 1<|\Lambda|\le\frac32\\
    100\alpha &\qquad \text{if } |\Lambda|>\frac32.\\
    \end{cases}
\end{equation}
In Figure~\ref{fig:ap-approx} we present, from left to right, a plot of the
power spectrum of $\hat f$ and the results of the evaluation of $\ev\hat f$,
of $\ev(R_\gamma\hat f)$ for the angle $\gamma = 10\times 2\pi/N$,  and of
$\ev(\hat\tau_\xi\hat f)$ for $\xi\sim (15,26)$. In Table~\ref{tab:norms}, we present the corresponding $L^2$ norms. In particular, we observe that the $L^2$ norm of $\ev(\hat\tau_\xi\hat f)$ is stable for the AP approximation, contrarily to what happens for the AP interpolation.
Obviously, we see also that the effect of discrete rotations is perfect for both interpolation and approximation.

\begin{figure}
	\begin{minipage}[b]{.24\linewidth}
		\centering\includegraphics[width=.7\textwidth]{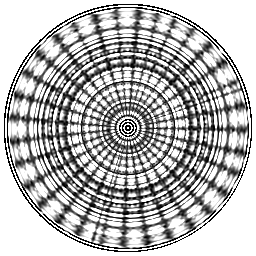}
	\end{minipage}
	\begin{minipage}[b]{.24\linewidth}
		\centering\includegraphics[width=.7\textwidth]{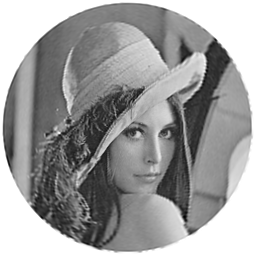}
	\end{minipage}%
  \begin{minipage}[b]{.24\linewidth}
		\centering\includegraphics[width=.7\textwidth]{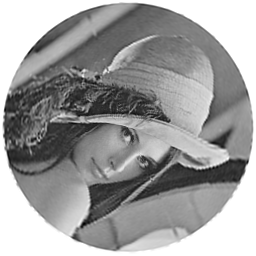}
	\end{minipage}
  \begin{minipage}[b]{.24\linewidth}
		\centering\includegraphics[width=.7\textwidth]{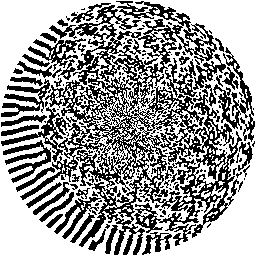}
	\end{minipage}
	\begin{minipage}[b]{.24\linewidth}
		\centering\includegraphics[width=.7\textwidth]{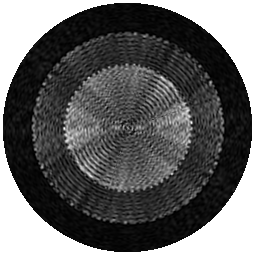}
		\subcaption{Magnitudes: $|\hat f|^2$.\\ $ $\\ $ $}
		\label{fig:1b}
	\end{minipage}
	\begin{minipage}[b]{.24\linewidth}
		\centering\includegraphics[width=.7\textwidth]{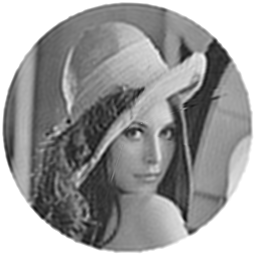}
		\subcaption{Evaluation: $\ev \,\hat f$.\\ $ $\\ $ $}
		\label{fig:1a}
	\end{minipage}%
	\begin{minipage}[b]{.24\linewidth}
		\centering\includegraphics[width=.7\textwidth]{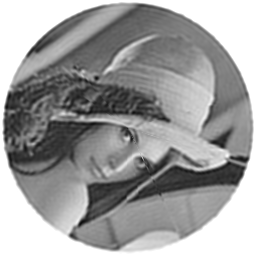}
		\subcaption{Rotation of $\gamma = 20\pi/N$ 
      and evaluation: $\ev \big(R_\gamma\hat f\big)$.\\$ $}
		\label{fig:1b}
	\end{minipage}
  \begin{minipage}[b]{.24\linewidth}
		\centering\includegraphics[width=.7\textwidth]{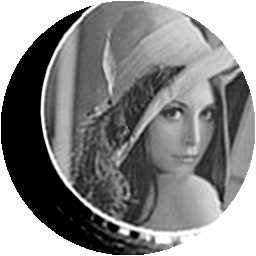}
		\subcaption{Translation of $\xi\sim(15,26)$ and
      evaluation: $\ev\big(\hat\tau_{\xi}\hat f\big)$.}
		\label{fig:1b}
	\end{minipage}
	\caption{Tests on a $246\times256$ image with $E=F$, $Q = 340$, $N=64$. \emph{First row:} AP interpolation. \emph{Second row:} AP approximation with weights \eqref{eq:weights} and $\alpha = 100$.}
	\label{fig:ap-approx}
\end{figure}

\begin{table}
	\centering
	\begin{tabular}{l || c | c | c | c |}
		 & $\hat f$ & $\ev\big(\hat f\big)$ & $\ev \big(R_\gamma\hat f\big)$& $\ev\big(\hat\tau_{\xi}\hat f\big)$ \\
		 \hline
		AP interpolation & $6.7\times 10^{5}$ & $80.3$ & $80.3$ & $3.0 \times 10^{10}$\\
		AP approximation & $0.2$ & $80.0$ & $80.0$ &  $79.0$ \\
	\end{tabular}	
	\caption{$L^2$ norms of the AP interpolation and approximation of
    Figure~\ref{fig:ap-approx} of an image with $L^2$ norm $80.1$ on the set
    $E$. In the first column we have the $L^2$ norms of the vector of
    frequencies $\{\hat f(\Lambda)\}_{\Lambda\in F}$, while in the second, third
    and last ones we have the $L^2$ norms of the vector obtained by applying the
    evaluation operator to $\hat f$, to its rotation by $\gamma =
    10\times2\pi/N$, and to its translation by $\xi\tilde (15,26)$, respectively.}
	\label{tab:norms}
\end{table}

\appendix

\section{Irreducible unitary representations of semidirect products}\label{sec:aux}

In this section we prove that the $T^\lambda$'s introduced in Section~\ref{sec:harmonic} are indeed representations of $\bG$. Although this is a trivial consequence of Mackey's theory, we preferred to present here a direct proof of this fact.

\begin{lemma}\label{lem:irrep}
	For any $\lambda\in \cS\subset\widehat\bH$ the operator on $L^2(\bK)\simeq \bC^N$ given by
	\begin{equation}
		T^\lambda(k,x) = \diag_h\left( \phi(h)\lambda(x) \right)S(k),
	\end{equation}
	is a unitary irreducible representations
\end{lemma}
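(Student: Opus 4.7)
The plan is to verify three things in order --- the homomorphism property, unitarity, and irreducibility --- with the last being the only substantive step.

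For the homomorphism property $T^\lambda((k_1,x_1)(k_2,x_2)) = T^\lambda(k_1,x_1)T^\lambda(k_2,x_2)$, I would use the semidirect product law $(k_1,x_1)(k_2,x_2) = (k_1 k_2,\, x_1 + \phi(k_1)x_2)$ together with the commutation identity $S(k)\,\diag_h(f(h))\,S(k)^{-1} = \diag_h(f(k^{-1}h))$, which is a direct consequence of $S(k)\delta_{h_0} = \delta_{kh_0}$. Pulling $S(k_1)$ past the diagonal factor and collecting terms reduces the claim to the scalar identity $\phi(h)\lambda(x_1)\cdot \phi(k_1^{-1}h)\lambda(x_2) = \phi(h)\lambda\bigl(x_1 + \phi(k_1)x_2\bigr)$, which follows from the definition $\phi(k)\lambda(x) = \lambda(\phi(k^{-1})x)$ and the homomorphism property of $\phi$ on the abelian group $\bH$.

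Unitarity is immediate: each entry $\phi(h)\lambda(x)$ has modulus one since $\lambda$ is a unitary character of $\bH$, so $\diag_h(\phi(h)\lambda(x))$ is a unitary diagonal matrix, while $S(k)$ is a permutation matrix and is therefore unitary as well; their product is unitary.

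The substantive step is irreducibility, where the hypothesis $\lambda \in \widehat\cS$ enters essentially. Triviality of the stabilizer means the $N$ characters $\{\phi(h)\lambda\}_{h\in\bK}$ of $\bH$ are pairwise distinct, so by the classical linear independence of distinct characters of an abelian group, the vectors $\bigl(\phi(h)\lambda(x)\bigr)_{h\in\bK}\in\bC^\bK$ span $\bC^\bK$ as $x$ ranges over $\bH$. Equivalently, $\bC$-linear combinations of the operators $\diag_h\bigl(\phi(h)\lambda(x)\bigr)$ exhaust the full commutative algebra of diagonal operators on $\bC^\bK$. Any $T^\lambda$-invariant subspace $V$ must then be invariant under every coordinate projection $\diag(\delta_h)$, which forces $V=\spn\{\delta_h : h\in I\}$ for some $I\subset\bK$. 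Invariance under $S(k)$, which permutes the basis transitively via $\delta_{h_0}\mapsto\delta_{kh_0}$, then forces $I=\emptyset$ or $I=\bK$, so $V=\{0\}$ or $V=\bC^\bK$.

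The main obstacle I anticipate is making explicit how the two ingredients combine in the irreducibility step: the trivial-stabilizer hypothesis must be used to obtain the \emph{full} diagonal algebra (not merely a proper subalgebra), and the transitive action of $\bK$ on itself must then rule out partial unions of basis vectors. Conflating these would obscure precisely where the $\widehat\cS$ hypothesis is used.
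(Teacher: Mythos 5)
Your proof is correct, but the irreducibility step takes a genuinely different route from the paper's. The paper argues via Schur's lemma on the commutant: any matrix $M$ commuting with all $T^\lambda(k,x)$ must be circulant (from commuting with the shifts $S(k)$) and diagonal (from commuting with $\diag_h(\phi(h)\lambda(x))$, using the trivial stabilizer to separate the characters $\phi(r)\lambda\neq\phi(s)\lambda$ pointwise), hence scalar. You instead work directly with invariant subspaces: the trivial-stabilizer hypothesis gives $N$ pairwise distinct characters, whose linear independence (Artin--Dedekind) shows the operators $\diag_h(\phi(h)\lambda(x))$ span the full diagonal algebra, so any invariant subspace is a coordinate subspace $\spn\{\delta_h: h\in I\}$, and transitivity of the regular action of $\bK$ on itself via $S(k)$ forces $I=\emptyset$ or $I=\bK$. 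The two arguments are dual (trivial commutant versus no proper invariant subspace, equivalent here since the representation is unitary and finite-dimensional), and the two factors of $T^\lambda$ play parallel roles in each; the main difference is that your route invokes the strictly stronger fact of linear independence of distinct characters, where the paper only needs that distinct characters differ at some point of $\bH$ --- a trade-off of a heavier classical input for a more transparent geometric conclusion. Your unitarity argument (unimodular diagonal times permutation matrix) is also more direct than the paper's verification that $T^\lambda(g)^{-1}=T^\lambda(g)^*$, and the homomorphism step is essentially identical.
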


\begin{proof}
	Observe that, for any vector $v\in \bC^N$ and any $k\in\bK$ we have $S(k)\circ\diag v = \diag(S(k)v)\circ S(k)$. Thus, for any $(k,x),(h,y)\in \bG$, 
	\begin{multline}
			T^\lambda(k,x)\circ T^\lambda(h,y) 
			= \diag_r\left(\phi(r)\lambda(x)\right) \circ \diag_r \left( \phi(k^{-1}r)\lambda(y) \right)\circ S(hk)\\
			= \diag_r\left(\phi(r)\lambda(x+\phi(k)y)\right)\circ S(hk)
			= T^\lambda\big((k,x)(h,y)\big).
	\end{multline}
	This proves that $T^\lambda$ is a representation.
	
	To prove that $T^\lambda$ is unitary, observe that $S(k)^* = S(k^{-1})$ for all $k\in \bK$ and so that
	\begin{multline}
		T^\lambda(k,x)^{-1}
		= T^\lambda\big((k,x)^{-1}\big) 
		= T^\lambda(k^{-1},-\phi(k^{-1})x) 
		= \diag_r\left( \phi(kr)\bar\lambda(x)  \right)\circ S(k^{-1})\\
		= S(k^{-1})\circ \diag_r\left( \phi(r)\bar\lambda(x)  \right)
		= T^\lambda(k,x)^*.
	\end{multline}
	
	The irreducibility can be directly proved by Schur's Lemma for unitary representations \cite[Proposition 4]{Barut77theoryof}. Namely, it suffices to show that the only $N\times N$ matrices commuting with all the $T^\lambda(k,x)$ are the scalar multiples of the identity.
		Let $M\in\bC^{N\times N}$ be commuting with all the $T^\lambda(k,x)$. Then, in particular, it commutes with $T^\lambda(k,0) = S(k)$ for all $k\in\bK$. A direct computation shows that this implies $M_{r,s} = M_{rk^{-1},sk^{-1}}$ for all $r,s,k\in\bK$, i.e., that the matrix is circulant.
		On the other hand, $M$ has to commute with $T^\lambda(e,x) = \diag_h\left( \phi(h)\lambda(x) \right)$, where $e$ is the identity element in $\bK$.
		Again, direct computations yield that $M_{r,s} = \lambda(\phi(r^{-1})x-\phi(s^{-1})x) M_{r,s}$ for all $r,s\in\bK$, that is, $M$ is diagonal.
		Since the only circulant and diagonal matrices are the scalar multiples of the identity, this completes the proof.
\end{proof}

\section*{Aknowledgements} This research has been supported by the Grant
ANR-15-CE40-0018 of the ANR, and partially supported by the European Research
Council, ERC StG 2009 “GeCoMethods”, contract n. 239748. 

\section*{References}

\bibliographystyle{elsarticle-num}

\end{document}